\documentclass[a4paper,pdftex,reqno,11pt]{amsart}

\allowdisplaybreaks[1]

\usepackage{geometry}

\usepackage{graphicx}
\usepackage{enumerate}
\usepackage{courier}
\usepackage{times}
\usepackage{amsmath,amssymb}
\usepackage{algorithm}
\usepackage{algorithmic}
\usepackage{hyperref}
\usepackage{color}

%%%%%%%%%%%%%%%%%%%%%%%%%%%%%%%%%%%%%%%%%%%%%%%%%%%%%%%%%%%%%%%%%%%%%%%%%%%%%%
%% Theorems:
%%%%%%%%%%%%%%%%%%%%%%%%%%%%%%%%%%%%%%%%%%%%%%%%%%%%%%%%%%%%%%%%%%%%%%%%%%%%%%
\theoremstyle{plain}
\newtheorem{Theorem}{Theorem}[section]
\newtheorem{Proposition}[Theorem]{Proposition}
\newtheorem{Conjecture}[Theorem]{Conjecture}
\newtheorem{Corollary}[Theorem]{Corollary}
\newtheorem{Lemma}[Theorem]{Lemma}

\newenvironment{Proof}
{\begin{trivlist}\item[]{{\sc Proof.}}}{\hfill{$\square$}\noindent\end{trivlist}}

\theoremstyle{definition}
\newtheorem{Definition}[Theorem]{Definition}

\theoremstyle{remark}

%%%%%%%%%%%%%%%%%%%%%%%%%%%%%%%%%%%%%%%%%%%%%%%%%%%%%%%%%%%%%%%%%%%%%%%%%%%%%%
%% End Theorems.
%%%%%%%%%%%%%%%%%%%%%%%%%%%%%%%%%%%%%%%%%%%%%%%%%%%%%%%%%%%%%%%%%%%%%%%%%%%%%%

\DeclareMathOperator{\rk}{rk}
\newcommand{\gaussmnum}[3]{\left[\begin{smallmatrix}{#1}\\{#2}\end{smallmatrix}\right]_{#3}}
\newcommand{\gaussmset}[2]{\left[\begin{smallmatrix}{#1}\\{#2}\end{smallmatrix}\right]}
\newcommand{\F}{\ensuremath{\mathbb{F}}}
\newcommand{\Fq}{\ensuremath{\F_q}}
\newcommand{\Fqn}{\ensuremath{V}}
\newcommand{\PFqn}{\ensuremath{P(\Fqn)}}
\newcommand{\Gqnk}{\ensuremath{\gaussmset{\Fqn}{k}}}
\newcommand{\dham}{\mathrm{d}_{\mathrm{h}}}
\newcommand{\rdist}{\mathrm{d}_{\mathrm{r}}}
\newcommand{\sdist}{\mathrm{d}_{\mathrm{s}}}

\newcommand{\cdc}{\texttt{cdc}}
\newcommand{\cdcs}{\texttt{cdcs}}
\newcommand{\rref}{\texttt{rref}}
\newcommand{\MRD}{\texttt{MRD}}

\begin{document}

%%%%%%%%%%%%%%%%%%%%%%%%%%%%%%%%%%%%%%%%%%%%%%%%%%%%%%%%%%%%%%%%%%%%%%%%%%%%%%
%% Title:
%%%%%%%%%%%%%%%%%%%%%%%%%%%%%%%%%%%%%%%%%%%%%%%%%%%%%%%%%%%%%%%%%%%%%%%%%%%%%%

\title{A note on the linkage construction for constant dimension codes}

%%\date{}

 \author{Sascha Kurz}
 \address{Sascha Kurz, University of Bayreuth, 95440 Bayreuth, Germany}
 \email{sascha.kurz@uni-bayreuth.de}

\abstract{Constant dimension codes are used for error control in random linear network coding, 
so that constructions for these codes with large cardinality have achieved wide attention in 
the last decade. Here, we improve the so-called linkage construction. %% and obtain several parametric series of improvements for the code sizes.
Known upper bounds for constant dimension codes containing lifted {\MRD} codes are generalized.\\
\textbf{Keywords:} constant dimension codes, linkage construction, network coding\\
\textbf{MSC:} Primary 51E20;  Secondary 05B25, 94B65.
}}

\maketitle

\section{Introduction}
Let $V\cong{\F}_q^v$ be a $v$-dimensional vector space over the finite field {\Fq} with $q$ elements.
By {\Gqnk} we denote the set of all $k$-dimensional subspaces in {\Fqn}, where $0\le k\le v$. The size of the so-called 
\emph{Grassmannian} {\Gqnk} is given by the $q$-binomial coefficient $\gaussmnum{v}{k}{q}:= \prod_{i=1}^{k} \frac{q^{v-k+i}-1}{q^i-1}$. More generally,
the set ${\PFqn}$ of all subspaces of $V$ forms a metric space with respect to the subspace distance defined by
$\sdist(U,W) = \dim(U+W)-\dim(U \cap W)=\dim(U)+\dim(W)-2\dim(U \cap W)$. Coding theory on {\PFqn} is motivated by K{\"o}tter
and Kschi\-schang \cite{MR2451015} via error correcting random network coding. For $\mathcal{C}\subseteq {\Gqnk}$ 
we speak of a \emph{constant dimension code} (\cdc), where the minimum subspace distance $\sdist$ is always an even integer. By 
a $(v,N,d;k)_q$ code we denote a {\cdc} in {\Fqn} with minimum (subspace) distance $d$ and cardinality $N$. The corresponding maximum size 
is denoted by $A_q(v,d;k)$. In geometrical terms, a $(v,N,d;k)_q$ code $\mathcal{C}$ is a set of $N$ $k$-dimensional subspaces of 
$V$, $k$-spaces for short, such that any $(k-d/2+1)$-space is contained in at most one element of $\mathcal{C}$. In other words, 
each two different codewords intersect in a subspace of dimension at most $k-d/2$. For two $k$-spaces $U$ and $W$ that have an 
intersection of dimension zero, we will say that they intersect trivially or are disjoint (since they do not share a common point). 
We will call $1$-, $2$-, $3$-, and $4$-spaces, points, lines, planes, and solids, respectively. For the known lower and upper bounds 
on $A_q(v,d;k)$ we refer to the online tables \url{http://subspacecodes.uni-bayreuth.de} associated with the survey \cite{HKKW2016Tables}.
Here we improve the so-called \emph{linkage construction} \cite{MR3543532} and %%obtain several parametric series of improvements.
generalize upper bounds for constant dimension codes containing \emph{lifted {\MRD} codes}.

\section{Preliminaries}
In the following we will mainly consider the case $V={\F}_q^v$ in order to simplify notation. We associate with a subspace $U\in {\Gqnk}$ a 
unique $k\times v$ matrix $X_U$ in row reduced echelon 
form (\rref) having the property that $\langle X_U\rangle=U$ and denote the corresponding bijection
\[
  \gaussmset{\mathbb{F}_q^v}{k} \rightarrow \{X_U \in \mathbb{F}_q^{k \times v} | \rk(X_U)=k, \text{$X_U$ is in \rref}\}
\]
by $\tau$. An example is given by $X_U=\left(\begin{smallmatrix}1&0&0\\0&1&1\end{smallmatrix}\right)\in \mathbb{F}_2^{2 \times 3}$, where
$U=\tau^{-1}(X_U)\in \gaussmset{\mathbb{F}_2^3}{2}$ is a line that contains the three points $(1,0,0)$, $(1,1,1)$, and $(0,1,1)$. 
With this, we can express the subspace distance between two $k$-dimensional subspaces $U,W\in\Gqnk$ via the rank of a matrix:
\begin{equation}
  \label{eq_d_s_rk}
  \sdist(U,W)=2\dim(U+W)-\dim(U)-\dim(W)=2\left(\rk\!\left(\begin{smallmatrix}\tau(U)\\ \tau(W)\end{smallmatrix}\right)-k\right).
\end{equation}

By $p\colon \{M \in \mathbb{F}_q^{k \times v} | \rk(M)=k, \text{M is in \rref}\} \rightarrow \{x \in \mathbb{F}_2^v \mid \sum_{i=1}^v x_i = k\}$
we denote the pivot positions of the matrix in {\rref}. For our example $X_U$ we we have $p(X_U)=(1,1,0)$. Slightly abusing notation we
also write $p(U)$ for subspaces $U\in\Gqnk$ instead of $p(\tau(U))$. The Hamming distance $\dham(u,w)=\# \{i \mid u_i \ne w_i\}$,
for two vectors $u,w \in \mathbb{F}_2^v$, can be used to lower bound the subspace distance between two codewords. %% (of the same dimension).
\begin{Lemma}\cite[Lemma~2]{MR2589964}
\label{lemma_d_s_d_h}
For two subspaces $U,W\in P(V)$ %%\in{\Gqnk}$, 
we have $$\sdist(U,W) \ge \dham(p(U),p(W)).$$
\end{Lemma}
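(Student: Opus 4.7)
My plan is to rewrite both sides of the inequality in terms of the pivot sets $S_X := \{j \in \{1,\ldots,v\} : p(X)_j = 1\}$ for $X \in \{U,W\}$. Using the identity $\sdist(U,W) = 2\dim(U+W) - \dim(U) - \dim(W)$ (which extends formula~\eqref{eq_d_s_rk} to subspaces of possibly different dimensions), together with $\dim(X) = |S_X|$ and
$$\dham(p(U),p(W)) = |S_U \triangle S_W| = 2|S_U \cup S_W| - |S_U| - |S_W|,$$
the claim reduces to the purely set-theoretic inequality $\dim(U+W) \ge |S_U \cup S_W|$.

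To establish this, I will use the standard characterization of pivot positions via the coordinate projections $\pi_{[j]} : V \to \F_q^j$ that forget all but the first $j$ coordinates: for any subspace $X \le V$ one has $|S_X \cap \{1,\dots,j\}| = \dim \pi_{[j]}(X)$, and consequently $j \in S_X$ if and only if $X$ contains a vector whose first nonzero coordinate is exactly $j$. This follows directly from the structure of the unique RREF matrix $\tau(X)$ (the leading entries of its rows).

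With this characterization in hand, the inclusion $S_U \cup S_W \subseteq S_{U+W}$ is almost immediate: a witness vector in $U$ (respectively $W$) for $j \in S_U$ (respectively $j \in S_W$) still lies in $U+W$ and retains its leading nonzero coordinate at position $j$, so it also witnesses $j \in S_{U+W}$. Hence $\dim(U+W) = |S_{U+W}| \ge |S_U \cup S_W|$, which closes the argument. There is no substantial obstacle; the only nuance is to invoke the subspace-distance identity in the form valid for subspaces of arbitrary and possibly different dimensions, rather than the rank-based formulation of~\eqref{eq_d_s_rk}, which was stated under the assumption $\dim U = \dim W = k$.
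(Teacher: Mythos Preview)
The paper does not supply its own proof of this lemma; it is quoted verbatim from \cite[Lemma~2]{MR2589964} without argument. Your proof is correct and self-contained, and it is essentially the standard argument one finds in the literature: reduce to $\dim(U+W)\ge |S_U\cup S_W|$ via the identity $\sdist(U,W)=2\dim(U+W)-\dim U-\dim W$ and the symmetric-difference expression for $\dham$, then observe $S_U\cup S_W\subseteq S_{U+W}$ because a vector in $U$ (or $W$) whose leading nonzero sits in column $j$ also lies in $U+W$. Your characterization of pivot positions through the projections $\pi_{[j]}$ and the ``first nonzero coordinate'' witnesses is accurate and makes the inclusion transparent.
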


For two matrices $A,B\in\mathbb{F}_q^{m\times n}$ we define the rank distance $\rdist(A,B):=\rk(A-B)$. A subset $\mathcal{M}\subseteq \mathbb{F}_q^{m\times n}$ is
called a rank metric code. 

\begin{Theorem}(see \cite{gabidulin1985theory})
  \label{thm_MRD_size}
  Let $m,n\ge d'$ be positive integers, $q$ a prime power, and $\mathcal{M}\subseteq \mathbb{F}_q^{m\times n}$ be a rank metric
  code with minimum rank distance $d'$. Then, $\# \mathcal{M}\le q^{\max\{n,m\}\cdot (\min\{n,m\}-d'+1)}$.
\end{Theorem}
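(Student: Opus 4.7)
The plan is to establish the bound by a puncturing / Singleton-style argument. First I would observe that we may assume, without loss of generality, that $m\le n$, so that $\min\{m,n\}=m$ and $\max\{m,n\}=n$: transposition is an isometry of $\mathbb{F}_q^{m\times n}$ for the rank distance (since $\rk(A^T)=\rk(A)$), so the transposed code has the same size and minimum distance, and the target bound $q^{n(m-d'+1)}$ is symmetric in the roles of the two dimensions.

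Next I would define a puncturing map $\pi\colon \mathbb{F}_q^{m\times n}\to \mathbb{F}_q^{(m-d'+1)\times n}$ that deletes a fixed set of $d'-1$ rows (say, the last $d'-1$ rows). The key step is to show that the restriction $\pi|_{\mathcal{M}}$ is injective. If $A,B\in\mathcal{M}$ satisfy $\pi(A)=\pi(B)$, then $A-B$ has at most $d'-1$ nonzero rows, hence $\rk(A-B)\le d'-1$. By the definition of the minimum rank distance of $\mathcal{M}$ this forces $A=B$, because otherwise $\rdist(A,B)=\rk(A-B)\ge d'$.

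Finally, from injectivity I would conclude
\[
  \#\mathcal{M}\;\le\;\#\mathbb{F}_q^{(m-d'+1)\times n}\;=\;q^{n\cdot(m-d'+1)}\;=\;q^{\max\{m,n\}\cdot(\min\{m,n\}-d'+1)},
\]
which is the stated inequality. The whole argument is essentially routine; the only place that requires care is verifying the WLOG reduction (so that the coefficient $\max\{m,n\}$ rather than $\min\{m,n\}$ appears in the exponent) and ensuring that the puncturing is applied along the shorter side so that the matrix $A-B$ with entries confined to $d'-1$ rows really has rank at most $d'-1$. No deep obstacle is expected; this is a direct analogue of the classical Singleton bound.
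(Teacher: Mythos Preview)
Your argument is correct and is the standard Singleton-type proof of the rank-metric Singleton bound. Note, however, that the paper does not supply its own proof of this theorem: it is quoted as a known result with a reference to Gabidulin's original paper, so there is nothing in the paper to compare your approach against. Your reduction to $m\le n$ via transposition and the row-deletion injectivity step are both sound; the hypothesis $m\ge d'$ guarantees that at least one row survives after puncturing, so the map $\pi$ is well defined.
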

Codes attaining this upper bound are called maximum rank distance (\MRD) codes. They exist for all choices of parameters. 
%%, which remains true if we restrict to linear rank metric codes, see  \cite{gabidulin1985theory}. 
If $m<d'$ or $n<d'$, then only $\# \mathcal{M}=1$ is possible, which can be achieved by a zero matrix and may be summarized to the single upper bound
$\# \mathcal{M}\le \left\lceil q^{\max\{n,m\}\cdot (\min\{n,m\}-d'+1)}\right\rceil$.
Using an $m\times m$ identity matrix $I_{m\times m}$ as a prefix one obtains the so-called lifted {\MRD} codes, i.e., the {\cdc} 
$\left\{\tau^{-1}(I_{m\times m}|A) \mid A\in\mathcal{M}\right\}\subseteq \gaussmset{\F_q^{m+n}}{m}$, where $(B|A)$ denotes the concatenation of the matrices $B$ and $A$. 
\begin{Theorem}\cite[Proposition 4]{silva2008rank}
  For positive integers $k,d,v$ with $k\le v$, $d\le 2\min\{k,v-k\}$, and $d$ even, the size of a lifted {\MRD} code $\mathcal{C}\subseteq \Gqnk$ with
  minimum subspace distance $d$ is given by \[\#\mathcal{C}=M(q,k,v,d):=q^{\max\{k,v-k\}\cdot(\min\{k,v-k\}-d/2+1)}.\] If $d>2\min\{k,v-k\}$, then we set $M(q,k,v,d):=1$.
\end{Theorem}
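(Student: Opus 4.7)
The plan is to reduce the statement to the size bound for MRD codes (Theorem about {\MRD} size) by showing that lifting is injective and doubles the rank distance.

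First, I would fix notation: the lifted {\MRD} code is $\mathcal{C}=\{\tau^{-1}(I_{k\times k}\mid A)\mid A\in\mathcal{M}\}$ for some rank metric code $\mathcal{M}\subseteq\mathbb{F}_q^{k\times(v-k)}$ of minimum rank distance $d'$. Since the pivot positions of $(I_{k\times k}\mid A)$ are the first $k$ coordinates, this matrix is already in {\rref}, so $\tau(\tau^{-1}(I\mid A))=(I\mid A)$ and the map $A\mapsto\tau^{-1}(I\mid A)$ is injective. Hence $\#\mathcal{C}=\#\mathcal{M}$.

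The key computation is the distance correspondence. For $A,B\in\mathcal{M}$, set $U=\tau^{-1}(I\mid A)$ and $W=\tau^{-1}(I\mid B)$. Using \eqref{eq_d_s_rk} together with the block row reduction
\[
\rk\!\begin{pmatrix} I_{k\times k} & A\\ I_{k\times k} & B\end{pmatrix}
=\rk\!\begin{pmatrix} I_{k\times k} & A\\ 0 & B-A\end{pmatrix}
=k+\rk(B-A),
\]
one obtains $\sdist(U,W)=2\rdist(A,B)$. Therefore $\mathcal{C}$ has minimum subspace distance $d$ if and only if $\mathcal{M}$ has minimum rank distance $d'=d/2$.

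It then remains to invoke the known {\MRD} size with parameters $m=k$, $n=v-k$, and $d'=d/2$. The hypothesis $d\le 2\min\{k,v-k\}$ guarantees $d'\le\min\{k,v-k\}$, so the bound of Theorem~\ref{thm_MRD_size} applies and is attained by some {\MRD} code $\mathcal{M}$, yielding $\#\mathcal{C}=\#\mathcal{M}=q^{\max\{k,v-k\}\cdot(\min\{k,v-k\}-d/2+1)}=M(q,k,v,d)$. The only place where care is needed is the block rank identity above and the verification that the matrix $(I\mid A)$ really is in {\rref} (so that $\tau$ is well-defined on it and the lifting is injective); the rest is a direct substitution into the {\MRD} cardinality formula.
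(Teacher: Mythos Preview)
The paper does not give its own proof of this theorem; it is simply quoted from \cite{silva2008rank}. Your argument is correct and is the standard one: in fact the paper performs exactly your block-rank computation in Section~3 (the displayed distance estimates for $\mathcal{C}'$) in the more general setting where $I_{k\times k}$ is replaced by an arbitrary $\tau(U)$, so your proof is precisely the reasoning the paper implicitly relies on when it cites this result.
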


So, for positive integers $v$, $k$, and $d$ with $d\le 2k\le v$ and $d\equiv 0\pmod 2$ we have
%%\begin{equation}
$$
  A_q(v,d;k)\ge q^{(v-k)\cdot(k-d/2+1)}.
$$  
%%\end{equation}
For a $(v,\star,d;k)_q$ code $\mathcal{C}$ each $(k-d/2+1)$-space is contained in at most one element of $\mathcal{C}$, so that
$$
  A_q(v,d;k)\le \frac{\gaussmnum{v}{k-d/2+1}{q}}{\gaussmnum{k}{k-d/2+1}{q}},
$$
which is known as the \emph{Anticode bound} since it is based on the largest possible anticode, see e.g.\ \cite[Theorem 1]{MR867648}. 
Analyzing the right hand side we obtain
\begin{equation}
  \label{eq_asymptotic_explicit}
  q^{(v-k)\cdot(k-d/2+1)}\le A_q(v,d;k)< 2\cdot q^{(v-k)\cdot(k-d/2+1)},
\end{equation}
see e.g.\ \cite[Proposition 8]{heinlein2017asymptotic}, noting that the upper bound is also valid if $2k>v$, i.e., $k>v-k$.

We will also need to count the number of subspaces with certain intersection properties, see e.g.\ \cite[Lemma 2]{heinlein2017asymptotic}:
\begin{Lemma}
  \label{lemma_count_subspaces}
  Let $W$ be a $w$-space in $\mathbb{F}_q^v$. Then, the number of $u$-spaces $U$ in $\mathbb{F}_q^v$ with $\dim(U\cap W)=i$ is given by 
  $q^{(w-i)(u-i)}\cdot \gaussmnum{w}{i}{q}\cdot \gaussmnum{v-w}{u-i}{q}$ for all $0\le i\le \min\{u,w\}$.
\end{Lemma}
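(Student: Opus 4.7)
The plan is to prove the count by a two-step conditioning: first fix the intersection $I=U\cap W$ inside $W$, then count the $u$-spaces $U$ extending $I$ that meet $W$ in exactly $I$.

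\textbf{Step 1: Counting the intersection.} Since $I$ is an $i$-dimensional subspace of $W$, there are $\gaussmnum{w}{i}{q}$ choices for $I$.

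\textbf{Step 2: Counting extensions with prescribed intersection.} Having fixed $I$, I would pass to the quotient $\overline{V}:=V/I$, which has dimension $v-i$ and contains the $(w-i)$-space $\overline{W}:=W/I$. The subspaces $U$ containing $I$ correspond bijectively to subspaces $\overline{U}\subseteq\overline{V}$, with $\dim(U)=\dim(\overline{U})+i$, and the condition $U\cap W=I$ translates into $\overline{U}\cap\overline{W}=\{0\}$. So the task reduces to counting $(u-i)$-spaces in the $(v-i)$-dimensional space $\overline{V}$ that intersect the fixed $(w-i)$-space $\overline{W}$ trivially.

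\textbf{Step 3: Auxiliary counting of disjoint subspaces.} I would prove the following auxiliary fact: in $\mathbb{F}_q^n$, the number of $s$-spaces disjoint from a fixed $t$-space $T$ equals $q^{ts}\gaussmnum{n-t}{s}{q}$. This is a standard count via ordered bases. A sequence of $s$ linearly independent vectors none of whose partial spans meets $T$ nontrivially must, at step $j=1,\dots,s$, avoid the span of $T$ together with the previously chosen $j-1$ vectors, giving $q^n-q^{t+j-1}$ choices. Dividing by the number $\prod_{j=0}^{s-1}(q^s-q^j)$ of ordered bases of an $s$-space and factoring out $q^{t+j-1}/q^{j-1}=q^t$ from each factor produces $q^{ts}$ and leaves $\prod_{j=1}^{s}\frac{q^{n-t-s+j}-1}{q^j-1}=\gaussmnum{n-t}{s}{q}$. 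Applying this with $n=v-i$, $t=w-i$, $s=u-i$ yields $q^{(w-i)(u-i)}\gaussmnum{v-w}{u-i}{q}$ extensions per choice of $I$.

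\textbf{Step 4: Combine.} Multiplying the count from Step 1 by the count from Step 3 gives exactly $q^{(w-i)(u-i)}\gaussmnum{w}{i}{q}\gaussmnum{v-w}{u-i}{q}$, as claimed. The only genuine calculation is the simplification in Step 3; the rest is just the quotient reduction. A useful sanity check before writing up is the boundary $i=0$, which should recover the count $q^{wu}\gaussmnum{v-w}{u}{q}$ of $u$-spaces disjoint from $W$, and $i=u=w$, which should give $1$.
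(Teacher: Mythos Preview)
Your proof is correct. The paper does not give its own proof of this lemma; it is stated with a reference to \cite[Lemma 2]{heinlein2017asymptotic} and used as a known counting result, so there is nothing to compare against beyond noting that your quotient-and-ordered-bases argument is exactly the standard way this formula is derived.
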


Directly from the definition of the $q$-binomial coefficients we conclude $\gaussmnum{a}{b}{q}=\gaussmnum{a}{a-b}{q}$,
\begin{equation}
  \label{eq_qbin_plus_1}
  \gaussmnum{a+1}{b}{q}/\gaussmnum{a}{b}{q}=\prod_{i=1}^{b} \frac{q^{a+1-b+i}-1}{q^{a-b+i}-1}=\frac{q^{a+1}-1}{q^{a-b+1}-1}
\end{equation}
and
\begin{equation}
  \label{eq_qbin_minus_1}
  \gaussmnum{a-1}{b}{q}/\gaussmnum{a}{b}{q}=\prod_{i=1}^{b} \frac{q^{a-1-b+i}-1}{q^{a-b+i}-1}=\frac{q^{a-b}-1}{q^{a}-1}.
\end{equation}

As shown in e.g.\ \cite[Lemma 4]{MR2451015} we have 
\begin{equation}
  \label{ie_qbin}
  q^{(a-b)b}\le \gaussmnum{a}{b}{q}\le 4q^{(a-b)b}\le q^2\cdot q^{(a-b)b}. 
\end{equation}

\section{The linkage construction revisited}
In this section we briefly review the so-called linkage construction with its different variants before we present our improvement in 
Theorem~\ref{main_thm}. The basic idea is the same as for lifted {\MRD} codes. Instead of a $k\times k$ identity matrix $I_{k\times k}$ we can also use any matrix of 
full row rank $k$ as a prefix for the matrices from a rank metric code. Let $v$, $m$, $d$, and $k$ be integers with $2\le k\le v$, $2\le d\le 2k$, and $k\le m\le v-k$. Starting 
from an $(m,N,d;k)_q$ code $\mathcal{C}$ and an {\MRD} code $\mathcal{M}$ of 
$k\times (v-m)$-matrices over $\F_q$ with rank distance $d/2$, we 
can construct a {\cdc}
$$
  \mathcal{C}'=\left\{ \tau^{-1}\left(\tau(U)|A\right) \mid U\in \mathcal{C}, A \in \mathcal{M}\right\}\subseteq \ensuremath{\gaussmset{\F_q^v}{k}}.
$$ 
%%where $(B|A)$ denotes the concatenation of the matrices $B$ and $A$. 
This generalized lifting idea was called 
\emph{Construction D} in \cite[Theorem~37]{silberstein2015error}, cf.~\cite[Theorem~5.1]{gluesing2015cyclic}. 
For different $U,U'\in\mathcal{C}$ and different $A,A'\in \mathcal{M}$ we have
$$
  \sdist\!\Big(\tau^{-1}\!\big(\tau(U)|A\big),\tau^{-1}\!\big(\tau(U)|A'\big)\Big)\ge 
  2\Big(\rk\!\big(\tau(U)\big)-k+\rk\!\left(A-A'\right)\Big)=2\rk\!\left(A-A'\right) \ge d,
$$
$$
  \sdist\!\Big(\tau^{-1}\!\big(\tau(U)|A\big),\tau^{-1}\!\big(\tau(U')|A\big)\Big)\ge
  2\left(\rk\!\left(\begin{smallmatrix}\tau(U)\\ \tau(U')\end{smallmatrix}\right)-k\right)=\sdist(U,U')\ge d,
$$
and
$$
  \sdist\!\Big(\tau^{-1}\!\big(\tau(U)|A\big),\tau^{-1}\!\big(\tau(U')|A'\big)\Big)\ge
  2\left(\rk\!\left(\begin{smallmatrix}\tau(U)\\ \tau(U')\end{smallmatrix}\right)-k\right)=\sdist(U,U')\ge d
$$
due to Equation~(\ref{eq_d_s_rk}). Since $\mathcal{C}'$ consists of $k$-spaces and has minimum subspace distance at least $d$,  
we obtain
\begin{equation}
  A_q(v,d;k)\ge A_q(m,d;k)\cdot \left\lceil q^{(v-m)(k-d/2+1)}\right\rceil
\end{equation}
for $k\le m\le v-k$.  In terms of pivot vectors we have that the $k$ ones in $p(U)$ all are contained in the first $m$ entries for all 
$U\in\mathcal{C}'$. Geometrically, there exists a $(v-m)$-space $W\le \F_q^v$ that is disjoint to all codewords. Since $W\cong \F_q^{v-m}$ 
there exists an $(v-m,N'',d;k)_q$ code $\mathcal{C}''$ of cardinality $N''=A_q(v-m,d;k)$ that can be embedded into $W$. For all 
$U'\in\mathcal{C}'$ and all $U''\in\mathcal{C}''$ we have $\sdist(U',U'')=2k\ge d$, so that   
%% Of course we can put a few more codewords into $W$, %%this subspace, 
%% without decreasing the minimum subspace distance:
\begin{equation}
  \label{ie_linkage_construction}
  A_q(v,d;k)\ge A_q(m,d;k)\cdot \left\lceil q^{(v-m)(k-d/2+1)}\right\rceil+A_q(v-m,d;k)
\end{equation}
for $k\le m\le v-k$. This is called \emph{linkage construction} in \cite[Theorem~2.3]{MR3543532}, cf.~\cite[Corollary~39]{silberstein2015error}. 
However, the assumption $\dim(U'\cap U'')=0$ can be weakened if $d<2k$. %%, since codewords are allowed to intersect in a subspace of dimension $k-d/2$. 
Let $W'$ be an arbitrary $\left(v-m+k-\tfrac{d}{2}\right)$-space containing $W$ and $\mathcal{C}'''$ 
be a $(v-m+k-d/2,N''',d;k)_q$ {\cdc} embedded in $W'$. For all $U'\in\mathcal{C}'$ and all $U'''\in\mathcal{C}'''$ we have $\sdist(U',U''')=2k-2\dim(U'\cap U''')\ge 
2k-2\dim(U'\cap W')\ge d$, so that
%% So, if we enlarge $W$ arbitrarily to a subspace $W'$ of dimension $v-m+k-d/2$, then the subspace 
%%  distance is also not decreased and we obtain
\begin{equation}
  \label{ie_linkage_construction_improved}
  %%A_q(v,d;k)\ge A_q(m,d;k)\cdot \left\lceil q^{\max\{v-m,k\}\cdot(\min\{v-m,k\}-d/2+1)}\right\rceil+A_q(v-m+k-d/2,d;k)
  A_q(v,d;k)\ge A_q(m,d;k)\cdot \left\lceil q^{(v-m)\cdot(k-d/2+1)}\right\rceil+A_q(v-m+k-d/2,d;k)
\end{equation}
for $k\le m\le v-k$. %%$k\le m\le v-d/2$. 
This is called \emph{improved linkage construction}, see \cite[Theorem~18, Corollary 4]{heinlein2017asymptotic}. 
Interestingly enough, in more than half %%54\% 
of the cases covered in \cite{HKKW2016Tables}, the best known lower bound for $A_q(v,d;k)$ is obtained via this inequality. The dimension 
of the utilized subspace $W'$ is tight in general. However, we may also consider geometrically more complicated objects than subspaces.

\begin{Definition}
  \label{def_B_q}
  Let $B_q(v_1,v_2,d;k)$ denote the maximum number of $k$-spaces in $\mathbb{F}_q^{v_1}$ with minimum subspace distance $d$ such that there 
  exists a $v_2$-space $W$ which intersects every chosen $k$-space in dimension at least $d/2$, where $0\le v_2\le v_1$.
\end{Definition}

Since the group of isometries of $\mathbb{F}_q^{v_1}$, with respect to the subspace distance, acts transitively on the set of $v_2$-spaces, we can fix an 
arbitrary $v_2$-space $W$ without changing the maximum possible number $B_q(v_1,v_2,d;k)$ of $k$-spaces. 

\begin{Theorem}
\label{main_thm}
$$
  A_q(v,d;k)\ge A_q(m,d;k)\cdot \left\lceil q^{(v-m)(k-d/2+1)}\right\rceil+B_q(v,v-m,d;k)
$$
for $k\le m\le v-k$.
\end{Theorem}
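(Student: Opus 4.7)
The plan is to extend the linkage code $\mathcal{C}'$ constructed at the start of this section by a set of $k$-spaces realizing $B_q(v,v-m,d;k)$, and then verify that the ``cross'' subspace distances are at least $d$.

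First I would fix $W$ to be the $(v-m)$-space consisting of the last $v-m$ coordinates of $\F_q^v$, and, by the transitivity remark after Definition~\ref{def_B_q}, choose a family $\mathcal{C}''''$ of $B_q(v,v-m,d;k)$ pairwise $\sdist$-far-apart $k$-spaces with $\dim(U''''\cap W)\ge d/2$ for every $U''''\in\mathcal{C}''''$. Simultaneously I take $\mathcal{C}'$ to be the linkage code built from an optimal $(m,A_q(m,d;k),d;k)_q$ code $\mathcal{C}$ and an {\MRD} code $\mathcal{M}\subseteq \F_q^{k\times(v-m)}$ of rank distance $d/2$ and size $\lceil q^{(v-m)(k-d/2+1)}\rceil$. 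The claim is that $\mathcal{C}'\cup\mathcal{C}''''$ is a {\cdc} of the asserted cardinality.

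Distances inside $\mathcal{C}'$ are at least $d$ by the three-case computation reproduced in the excerpt, and distances inside $\mathcal{C}''''$ are at least $d$ by the very definition of $B_q$. Disjointness of the two parts is immediate: every $U'\in\mathcal{C}'$ has all $k$ pivots in the first $m$ columns (since $\tau(U')=(\tau(U)|A)$ with $\tau(U)$ in {\rref}), so $U'\cap W=\{0\}$, whereas every $U''''\in\mathcal{C}''''$ meets $W$ in dimension at least $d/2\ge 1$.

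The only step of any substance is the cross-distance bound, and the key observation is that $U'\cap W=\{0\}$ forces $U'$ and $U''''\cap W$ to be disjoint subspaces. Hence
\[
  \dim(U'+U'''')\ge \dim\!\left(U'+(U''''\cap W)\right)=k+\dim(U''''\cap W)\ge k+\tfrac{d}{2},
\]
and equation~(\ref{eq_d_s_rk}) turns this into $\sdist(U',U'''')\ge d$. The main obstacle is conceptual rather than technical: one must see that the relevant structure is not a subspace $W'\supseteq W$ (as in the improved linkage construction~(\ref{ie_linkage_construction_improved})) but the much weaker condition encoded by $B_q$, namely that each added codeword merely shares a $(d/2)$-dimensional subspace with the fixed $W$. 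That the new bound is at least as strong as (\ref{ie_linkage_construction_improved}) then follows by placing $\mathcal{C}''''$ inside a $(v-m+k-d/2)$-space through $W$, which already shows $B_q(v,v-m,d;k)\ge A_q(v-m+k-d/2,d;k)$.
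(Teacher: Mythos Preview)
Your proof is correct and follows essentially the same approach as the paper: construct the linkage code $\mathcal{C}'$, observe it is disjoint from the fixed $(v-m)$-space $W$, adjoin a code realizing $B_q(v,v-m,d;k)$ relative to $W$, and verify the cross distances via the fact that $U'\cap W=\{0\}$ while $\dim(U''''\cap W)\ge d/2$. The only cosmetic difference is that the paper bounds $\dim(U'\cap U'''')\le k-d/2$ directly (using that $U'\cap U''''$ and $U''''\cap W$ are disjoint subspaces of the $k$-space $U''''$), whereas you bound $\dim(U'+U'''')\ge k+d/2$; these are of course equivalent.
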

\begin{proof}
  Let $k\le m\le v-k$ be an arbitrary integer, $\mathcal{C}$ be an $\left(m,N,d;k\right)_q$ code, where $N=A_q(m,d;k)$, and $\mathcal{M}$ an {\MRD}  of $k\times (v-m)$-matrices over $\F_q$ 
  with rank distance $d/2$. With this we set $\mathcal{C}':=\left\{\tau^{-1}(\tau(U)|A) \mid U\in\mathcal{C}, A\in\mathcal{M}\right\}\subseteq \ensuremath{\gaussmset{\F_q^{v}}{k}}$, 
  i.e., we apply the lifting construction to $\mathcal{C}$. As argued before, there exists a $(v-m)$-space $W$ that is disjoint from all elements from $\mathcal{C}'$. Now let 
  $\mathcal{C}''\subseteq \ensuremath{\gaussmset{\F_q^{v}}{k}}$ be a {\cdc} with minimum subspace distance $d$ such that every codeword intersects $W$ in dimension at least $d/2$, 
  which has the maximum possible cardinality.
  
  For each $U'\in\mathcal{C}'$ and each $U''\in\mathcal{C}''$ we have $\dim(U'\cap U'')\le k-d/2$ since $\dim(U')=\dim(U'')=k$, $\dim(U'\cap W)=0$, and $\dim(U''\cap W)\ge d/2$. 
  Thus, $\sdist(U',U'')\ge d$ and $A_q(v,d;k)\ge \#\mathcal{C}'+\#\mathcal{C}''=A_q(m,d;k)\cdot \left\lceil q^{(v-m)(k-d/2+1)}\right\rceil+B_q(v,v-m,d;k)$.
\end{proof}

The determination of $B_q(v,v-m,d;k)$ or $B_q(v_1,v_2,d;k)$ is a hard problem in general. %%, since it generalizes the determination of $A_q(v,d;k)$. 
So, we provide several parametric examples how Theorem~\ref{main_thm} can be applied to obtain improved lower bounds for $A_q(v,d;k)$ in the next section.  

An application of the linkage construction is a lower bound for $A_q(v,4;2)$. If $v\ge 4$ we can use Inequality~(\ref{ie_linkage_construction}) with $m=2$ 
to conclude $A_q(v,4;2)\ge q^{v-2}+A_q(v-2,4;2)$. 
Since $A_q(3,4;2)=A_q(2,4;2)=1$ this gives $A_q(v,4;2)\ge q^{v-2}+q^{v-4}+\dots+q^2+q^0=\gaussmnum{v}{1}{q}/\gaussmnum{2}{1}{q}$ for even $v\ge 2$ and 
$A_q(v,4;2)\ge q^{v-2}+q^{v-4}+\dots+q^3+q^0=\gaussmnum{v}{1}{q}/\gaussmnum{2}{1}{q}-\tfrac{q^2}{q+1}$ for odd $v\ge 3$, by induction on $v$. These lower bounds are indeed 
tight, see e.g.\ \cite[Theorem 4.2]{MR0404010}. 
If $v$ is even and the maximum cardinality $A_q(v,4;2)=\gaussmnum{v}{1}{q}/\gaussmnum{2}{1}{q}$ is attained the corresponding code is called a line spread. In general 
we call a set of pairwise disjoint lines a partial line spread. If $v$ is odd and we do not fill the final plane with a single codeword, then we get a partial 
line spread of cardinality $A_q(v,4;2)-1$ that is disjoint from a fixed plane $\pi$.
   
\section{Bounds for $B_q(v_1,v_2,d;k)$}   
\label{sec_bounds}

Given Theorem~\ref{main_thm} we are naturally interested in bounds for $B_q(v_1,v_2,d;k)$. If either $v_1<k$ or $v_2<\tfrac{d}{2}$, then we trivially have 
$B_q(v_1,v_2,d;k)=0$. Similarly, if $v_1\ge k$, $v_2\ge \tfrac{d}{2}$, and $d>2k$, then we also have $B_q(v_1,v_2,d;k)=0$. We will call those parameters \emph{trivial}. By refining 
the counting of $(k-d/2+1)$-spaces contained in codewords, underlying the presented argument for the Anticode bound, we obtain: 

\begin{Lemma}
  \label{lemma_lp_upper_bound}
  As an abbreviation we set $t:=k-\tfrac{d}{2}+1$ and $b(i,j):=q^{(i-j)(t-j)}\cdot \gaussmnum{i}{j}{q}\cdot \gaussmnum{k-i}{t-j}{q}$ 
  for all $1\le j\le \min\!\left\{t,v_2\right\}$ and $\max\{d/2,j\}\le i\le \min\{k,d/2-1+j\}$.  
  For non-trivial parameters we have
  $$
    B_q(v_1,v_2,d;k)\le \sum_{i=\tfrac{d}{2}}^{\min\{k,v_2\}} a_i,
  $$
  where the $a_i$ are non-negative integers satisfying the constraints
  \begin{equation}
     \label{ie_t_subspace}
     \sum_{i=\max\{d/2,j\}}^{\min\{k,d/2-1+j\}} b(i,j)\cdot a_i \le q^{(v_2-j)(t-j)}\cdot \gaussmnum{v_2}{j}{q}\cdot \gaussmnum{v_1-v_2}{t-j}{q}
  \end{equation}
  for each $1\le j\le \min\!\left\{t,v_2\right\}$ and
  \begin{equation}
    \label{ie_tail}
    \sum_{i=h}^{\min\{k,v_2\}} a_i \le A_q(v_2,2(h-t+1);h)
  \end{equation}
  for all $\max\{t,d/2\}\le h\le\min\{k,v_2\}$.  
\end{Lemma}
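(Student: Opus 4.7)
The plan is to classify the codewords of an arbitrary configuration achieving $B_q(v_1,v_2,d;k)$ by the dimension of their intersection with $W$ and then establish the two constraints by standard double counting. Concretely, let $\mathcal{C}\subseteq\gaussmset{\F_q^{v_1}}{k}$ be a set of $k$-spaces with pairwise subspace distance at least $d$ such that every $U\in\mathcal{C}$ meets the fixed $v_2$-space $W$ in dimension at least $d/2$, chosen so that $\#\mathcal{C}=B_q(v_1,v_2,d;k)$. Setting $a_i:=\#\{U\in\mathcal{C}:\dim(U\cap W)=i\}$, the range $d/2\le i\le\min\{k,v_2\}$ is forced by $U\le\F_q^{v_1}$ and $U\cap W\le W$, so that $B_q(v_1,v_2,d;k)=\sum_{i=d/2}^{\min\{k,v_2\}}a_i$, and it remains to verify the two families of constraints on these $a_i$.

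For the $t$-space constraint~(\ref{ie_t_subspace}), the refinement of the Anticode argument is to count, for each fixed $1\le j\le\min\{t,v_2\}$, the pairs $(T,U)$ with $U\in\mathcal{C}$, $T\le U$ a $t$-space, and $\dim(T\cap W)=j$. Any two distinct codewords intersect in dimension at most $k-d/2=t-1$, so every $t$-space $T$ lies in at most one codeword; hence the number of such pairs is bounded above by the total number of $t$-spaces in $\F_q^{v_1}$ with $\dim(T\cap W)=j$, which equals $q^{(v_2-j)(t-j)}\gaussmnum{v_2}{j}{q}\gaussmnum{v_1-v_2}{t-j}{q}$ by Lemma~\ref{lemma_count_subspaces}. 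For a codeword $U$ with $\dim(U\cap W)=i$, the $t$-subspaces $T\le U$ with $\dim(T\cap(U\cap W))=j$ can be counted inside $U\cong\F_q^k$ via the same lemma applied to the $i$-space $U\cap W$, giving exactly $b(i,j)$. Summing over codewords and comparing the two counts yields~(\ref{ie_t_subspace}); the stated range $\max\{d/2,j\}\le i\le\min\{k,d/2-1+j\}$ is simply the set of values for which $b(i,j)$ is nonzero.

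For the tail inequality~(\ref{ie_tail}), fix $h$ with $\max\{t,d/2\}\le h\le\min\{k,v_2\}$. For every codeword $U\in\mathcal{C}$ with $\dim(U\cap W)\ge h$ choose an $h$-subspace $T_U$ of $U\cap W$, which automatically lies in $W$. Given two such codewords $U\ne U'$, we have $T_U\cap T_{U'}\le U\cap U'$, so $\dim(T_U\cap T_{U'})\le k-d/2=t-1<h$, which forces both $T_U\ne T_{U'}$ and $\sdist(T_U,T_{U'})=2h-2\dim(T_U\cap T_{U'})\ge 2(h-t+1)$. Hence $\{T_U:\dim(U\cap W)\ge h\}$ is a constant dimension code of $h$-spaces in $W\cong\F_q^{v_2}$ with minimum distance $2(h-t+1)$, whose cardinality is exactly $\sum_{i\ge h}a_i$ and therefore bounded by $A_q(v_2,2(h-t+1);h)$. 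The only real subtlety is the bookkeeping for the valid range of $i$ in~(\ref{ie_t_subspace}), which amounts to checking when the $q$-binomial coefficients appearing in $b(i,j)$ vanish.
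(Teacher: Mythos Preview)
Your proof is correct and follows essentially the same approach as the paper: define $a_i$ by the intersection dimension with $W$, obtain \eqref{ie_t_subspace} by double counting $t$-subspaces $T$ of codewords with $\dim(T\cap W)=j$ (using Lemma~\ref{lemma_count_subspaces} both inside a codeword and globally, together with the fact that a $t$-space lies in at most one codeword), and obtain \eqref{ie_tail} by selecting $h$-subspaces of $U\cap W$ and bounding their pairwise intersection. One tiny wording issue: the summation range in \eqref{ie_t_subspace} is not \emph{only} determined by the nonvanishing of $b(i,j)$; the lower endpoint $d/2$ and the upper endpoint $k$ come from the support of the $a_i$, while $j\le i\le d/2-1+j$ is indeed where $b(i,j)\neq 0$. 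This does not affect the validity of the argument.
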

\begin{proof}  
   Let $V:=\mathbb{F}_q^{v_1}$, $W$ be a $v_2$-space in $V$, and $\mathcal{C}$ be a set of $k$-spaces in $V$ that intersect $W$ in 
  dimension at least $\tfrac{d}{2}$. By $a_i$ we denote the number of elements in $\mathcal{C}$ that have an intersection of dimension 
  exactly $i$ with $W$, so that $\#\mathcal{C}=\sum_{i=d/2}^{\min\{k,v_2\}} a_i$. %%As an abbreviation we set $t:=k-\tfrac{d}{2}+1$ noting 
  We note that every $t$-space is contained in at most one element from $\mathcal{C}$.
  
  Now, consider a codeword $U\in \mathcal{C}$ with intersection dimension $i=\dim(U\cap W)$. The $\gaussmnum{k}{t}{q}$ $t$-spaces $T$ contained in $U$ 
  can be distinguished by their intersection dimension $j=\dim(T\cap W)$. For each $\max\{1,t-k+i\}\le j\le \min\{i,t\}$ there are exactly 
  $q^{(i-j)(t-j)}\cdot \gaussmnum{i}{j}{q}\cdot \gaussmnum{k-i}{t-j}{q}$ such $t$-spaces, see Lemma~\ref{lemma_count_subspaces}. Since the number of $t$-spaces 
  in $V$ that intersect $W$ in dimension $j$ is given by $q^{(v_2-j)(t-j)}\cdot \gaussmnum{v_2}{j}{q}\cdot \gaussmnum{v_1-v_2}{t-j}{q}$ we obtain (\ref{ie_t_subspace}).
  
  Next we set $\mathcal{C}'=\{U\cap W\,:\, U\in\mathcal{C},\dim(U\cap W)\ge h\}$, so that $\#\mathcal{C}'=\sum_{i=h}^{\min\{k,v_2\}} a_i$. Now 
  let $\mathcal{C}''$ arise from $\mathcal{C}'$ by choosing an arbitrary $h$-subspace from each $U'\in\mathcal{C}'$ as codeword $U''\in\mathcal{C}''$.
  By construction we have $\dim(A''\cap B'')\le t-1=k-d/2$ for each pair of different codewords $A'',B''\in\mathcal{C}''$, so that $d(A'',B'')\ge 2(h-t+1)\ge 2$ 
  and $\#\mathcal{C}'=\#\mathcal{C}''$. Thus $\mathcal{C}''$ is a $(v_2,\#\mathcal{C},2(h-t+1);h)_q$ code and we obtain (\ref{ie_tail}).         
\end{proof}

\begin{Corollary}
  \label{cor_upper_bound}
  For non-trivial parameters we have $B_q(v_1,v_2,d;k)\le$
  $$
    A_q(v_2, (\Lambda+1) d-2k;\Lambda d/2)+\sum_{l=1}^{\Lambda-1} q^{(v_2-ld/2)(k-(l+1)d/2+1)}\cdot \gaussmnum{v_2}{ld/2}{q}\cdot \gaussmnum{v_1-v_2}{k-(l+1)d/2+1}{q} / \gaussmnum{k-ld/2}{d/2-1}{q},
  $$  
  where $\Lambda:=\left\lfloor 2k/d\right\rfloor$.
\end{Corollary}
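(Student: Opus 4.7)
The plan is to apply Lemma~\ref{lemma_lp_upper_bound} and select a judicious subset of its LP constraints. I would partition the index set $\{d/2, d/2+1, \ldots, \min\{k,v_2\}\}$ into the intervals $I_l := \{ld/2, \ldots, (l+1)d/2-1\}$ for $l=1,\ldots,\Lambda-1$ together with the tail $T := \{\Lambda d/2, \ldots, \min\{k,v_2\}\}$. Because $\Lambda=\lfloor 2k/d\rfloor$ gives $\Lambda d/2\le k$ and $(l+1)d/2-1<\Lambda d/2$ for $l\le\Lambda-1$, this is a genuine partition and each $I_l$ lies in the admissible range of (\ref{ie_t_subspace}).

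For the tail I would invoke (\ref{ie_tail}) with $h=\Lambda d/2$; a direct calculation yields $2(h-t+1)=(\Lambda+1)d-2k$, hence $\sum_{i\in T}a_i\le A_q(v_2,(\Lambda+1)d-2k;\Lambda d/2)$, which is the leading summand. For each $l\in\{1,\ldots,\Lambda-1\}$ I would invoke (\ref{ie_t_subspace}) with $j=ld/2$; the admissible range of $i$ is then exactly $I_l$, and the right-hand side equals $q^{(v_2-ld/2)(k-(l+1)d/2+1)}\,\gaussmnum{v_2}{ld/2}{q}\,\gaussmnum{v_1-v_2}{k-(l+1)d/2+1}{q}$, i.e.\ the numerator of the $l$-th summand in the corollary. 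Since $b(ld/2,ld/2)=\gaussmnum{k-ld/2}{t-ld/2}{q}=\gaussmnum{k-ld/2}{d/2-1}{q}$ matches the denominator there, dividing the constraint by this common factor reduces matters to proving the uniform lower bound $b(i,ld/2)\ge b(ld/2,ld/2)$ for every $i\in I_l$.

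This uniform bound is the main obstacle and the only substantive step. Expanding the ratio by iterated use of (\ref{eq_qbin_plus_1}), (\ref{eq_qbin_minus_1}), and the identity $k-j-s+1=(k-t-s+1)+(t-j)$ gives
$$
\frac{b(i,j)}{b(j,j)}=q^{\delta(t-j)}\prod_{s=1}^{\delta}\frac{(q^{j+s}-1)(q^{k-t-s+1}-1)}{(q^s-1)(q^{k-j-s+1}-1)},\qquad\delta:=i-j,
$$
so it suffices to show factorwise that $q^{t-j}(q^{j+s}-1)(q^{k-t-s+1}-1)\ge(q^s-1)(q^{k-j-s+1}-1)$ for every $s\in\{1,\ldots,\delta\}$. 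Writing $a:=t-j$, $b:=k-t-s+1$, $e:=s$, a short expansion rearranges this to $q^{a+e}(q^j-1)(q^b-1)\ge(q^a-1)(q^e-1)$. In the relevant regime $j\ge d/2\ge 1$, $a\ge 1$ (because $(l+1)d/2\le\Lambda d/2\le k$), and $b\ge 1$ (because $s\le\delta\le d/2-1$ and $k-t=d/2-1$), so the left-hand side is at least $q^{a+e}$, while the right-hand side equals $q^{a+e}-q^a-q^e+1<q^{a+e}$, which completes the factor inequality. Summing the resulting $\Lambda-1$ interval bounds with the tail bound then yields the claimed expression for $B_q(v_1,v_2,d;k)$.
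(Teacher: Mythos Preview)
Your proposal is correct and follows essentially the same approach as the paper: partition the index range into blocks $I_l$ handled by constraint~(\ref{ie_t_subspace}) at $j=ld/2$ plus a tail handled by~(\ref{ie_tail}) at $h=\Lambda d/2$, then reduce each block bound to the monotonicity $b(i,ld/2)\ge b(ld/2,ld/2)$. The only cosmetic difference is that the paper proves this monotonicity via the one-step ratio $b(i+1,j)/b(i,j)\ge 1$, whereas you expand the full ratio $b(i,j)/b(j,j)$ as a telescoping product and bound each factor; both arguments are short and yield the same conclusion.
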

\begin{proof}
  We apply Lemma~\ref{lemma_lp_upper_bound} and use the corresponding notation, i.e., we will upper bound $\sum_{i=\tfrac{d}{2}}^{\min\{k,v_2\}} a_i$.

  For $k<d$ we have $d/2\ge k-d/2+1=t$ so that we can apply Inequality~(\ref{ie_tail}) with $h=d/2$ to conclude 
  the proposed upper bound for $\Lambda=1$.
  
  In the following we assume $k\ge d$, i.e., $\Lambda\ge 2$. From Equation~(\ref{eq_qbin_plus_1}) and  Equation~(\ref{eq_qbin_minus_1}) we conclude 
  $$
    \frac{b(i+1,j)}{b(i,j)} = q^{(t-j)}\cdot \frac{q^{i+1}-1}{q^{i-j+1}-1} \cdot \frac{q^{k-i-(t-j)}-1}{q^{k-i}-1} 
    =\frac{q^{k + 1} - q^{t +i -j+ 1} - q^{k - i} + q^{t - j}}{q^{k+1-j} - q^{i - j + 1} - q^{k - i} + 1 }.
  $$
  Using $i\le \left(d/2-1+j\right)-1=k-t+j-1$ and $i\le k-1$ we obtain
  $$
  \frac{b(i+1,j)}{b(i,j)} \ge 
  \frac{q^{k + 1} - q^{k} - q^{k - i} + q^{t - j}}{q^{k+1-j} - q^{k-j} - q^{k - i} + 1 } 
  \overset{j\ge 1}{\underset{t-j\ge 0}{\ge}} \frac{q^{k + 1} - q^{k} - q^{k - i} + 1}{q^{k} - q^{k-1} - q^{k - i} + 1 }\ge 1,
  $$  
  i.e., the sequence $\left(b_{i,j}\right)_i$ is weakly monotonic increasing. 
  Thus, using $a_i\ge 0$, we conclude 
  \begin{equation}
    \label{ie_head_1}
    \sum_{i=ld/2}^{(l+1)d/2-1} b(ld/2,ld/2)\cdot a_i \le q^{(v_2-ld/2)(t-ld/2)}\cdot \gaussmnum{v_2}{ld/2}{q}\cdot \gaussmnum{v_1-v_2}{t-ld/2}{q}
  \end{equation}
  from Inequality~(\ref{ie_t_subspace}) with $j=ld/2$, where $1\le l< \Lambda$. Here we note that $\max\{d/2,j\}=ld/2$, due to $l\ge 1$, and 
  $\min\{k,d/2-1+j\}=(l+1)d/2-1$, due to  $l\le \left\lfloor 2k/d\right\rfloor-1$. Dividing Inequality~(\ref{ie_head_1}) by $b(ld/2,ld/2)$ 
  gives
  \begin{equation}
    \label{ie_head_2}
    \sum_{i=ld/2}^{(l+1)d/2-1} a_i \le q^{(v_2-ld/2)(t-ld/2)}\cdot \gaussmnum{v_2}{ld/2}{q}\cdot \gaussmnum{v_1-v_2}{t-ld/2}{q} / \gaussmnum{k-ld/2}{d/2-1}{q}
  \end{equation} 
  using $\gaussmnum{k-ld/2}{t-ld/2}{q}=\gaussmnum{k-ld/2}{d/2-1}{q}$. Since Inequality~(\ref{ie_tail}) with $h=\Lambda d/2$ 
  gives 
  \begin{equation}
    \label{ie_special_tail}
    \sum_{i=\Lambda d/2}^{\min\{k,v_2\}} a_i \le A_q(v_2, (\Lambda+1) d-2k;\Lambda d/2)
  \end{equation}
  we can add the right hand side of Inequality~(\ref{ie_special_tail}) to the sum over the right hand side of Inequality~(\ref{ie_head_2}) for $1\le l<\Lambda$ to 
  conclude the proposed upper bound.   
\end{proof}

Actually, Corollary~\ref{cor_upper_bound} is a generalization of the known upper bounds for {\cdc}s that contain a lifted {\MRD} code as a subcode:
\begin{Proposition}
  \label{prop_mrd_bound}
  Let $v$, $k$, and $d/2$ be positive integers with $d\le 2k\le v$ and $\mathcal{C}$ be a $(v,\star,d;k)_q$ code that contains a lifted {\MRD} code $\mathcal{C}'$ 
  of cardinality $q^{(v-k)\cdot(k-d/2+1)}$ as a subcode. Then, we have $\#C\le q^{(v-k)\cdot(k-d/2+1)} +B_q(v,v-k,d;k)$.  
\end{Proposition}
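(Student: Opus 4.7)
The plan is to choose the $(v-k)$-space $W\le\Fqn$ witnessing the $B_q$ term to be the one spanned by the last $v-k$ standard basis vectors, so that every $U'\in\mathcal{C}'$ (having $\tau(U')=(I_k|A')$ with $A'\in\mathcal{M}$) satisfies $U'\cap W=\{0\}$. It then suffices to show that every additional codeword $U''\in\mathcal{C}\setminus\mathcal{C}'$ satisfies $\dim(U''\cap W)\ge d/2$, since then $\mathcal{C}\setminus\mathcal{C}'$ is a {\cdc} of minimum distance $d$ whose elements all meet $W$ in dimension at least $d/2$, giving $\#(\mathcal{C}\setminus\mathcal{C}')\le B_q(v,v-k,d;k)$ by Definition~\ref{def_B_q}, and the claimed bound follows.

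The heart of the argument is the contrapositive: given a $k$-space $U''$ with $i:=\dim(U''\cap W)<d/2$, I would produce some $U'\in\mathcal{C}'$ with $\sdist(U'',U')<d$, which together with $U''\in\mathcal{C}$ forces $U''=U'\in\mathcal{C}'$. After an invertible change of basis of the first $k$ coordinates---which preserves $W$ and replaces $\mathcal{M}$ by an equivalent {\MRD} code---$U''$ has a basis expressible as the rows of a block matrix $\left(\begin{smallmatrix} I_{k-i} & 0 & C_1 \\ 0 & 0 & C_2 \end{smallmatrix}\right)$ with $C_2\in\F_q^{i\times(v-k)}$ of rank $i$. Writing $A'\in\mathcal{M}$ in block form $A'=\left(\begin{smallmatrix} A'_1 \\ A'_2 \end{smallmatrix}\right)$ with $A'_1$ its first $k-i$ rows, a short row reduction of the joint matrix together with (\ref{eq_d_s_rk}) yields $\sdist(U'',U')=2\rk\!\left(\begin{smallmatrix} C_1-A'_1 \\ C_2 \end{smallmatrix}\right)\le 2\rk(C_1-A'_1)+2i$.

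The main obstacle is to find some $A'\in\mathcal{M}$ with $\rk(C_1-A'_1)<d/2-i$, which yields $\sdist(U'',U')<d$ as required. For this I would consider the projection $\pi:\mathcal{M}\to \F_q^{(k-i)\times(v-k)}$, $A'\mapsto A'_1$. Since deleting $i$ rows drops rank by at most $i$ and $d/2-i>0$, any two distinct elements of $\mathcal{M}$ project to elements differing in rank at least $d/2-i$; hence $\pi$ is injective and $\pi(\mathcal{M})$ is a rank metric code of cardinality $q^{(v-k)(k-d/2+1)}$ with minimum rank distance at least $d/2-i$ in $\F_q^{(k-i)\times(v-k)}$. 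This cardinality exactly matches the upper bound of Theorem~\ref{thm_MRD_size} for such codes, so $\pi(\mathcal{M})$ is itself {\MRD}; by the maximality of an MRD code, every $C_1\in\F_q^{(k-i)\times(v-k)}$ lies within rank distance at most $d/2-i-1$ of some $\pi(A')$, supplying the required $U'$. The degenerate case $i=0$ is the same covering argument applied directly to $\mathcal{M}$, with no projection step needed.
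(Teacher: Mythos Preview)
Your argument is correct. Both your proof and the paper's take the same overall route: fix the $(v-k)$-space $W$ disjoint from every codeword of $\mathcal{C}'$ and show that any $U''\in\mathcal{C}\setminus\mathcal{C}'$ must satisfy $\dim(U''\cap W)\ge d/2$, whence $\#(\mathcal{C}\setminus\mathcal{C}')\le B_q(v,v-k,d;k)$ by Definition~\ref{def_B_q}. The difference lies in how the key covering fact is justified. The paper simply cites \cite[Lemma~4]{MR3015712}, which states that every $(k-d/2+1)$-space disjoint from $W$ lies inside some codeword of $\mathcal{C}'$; from this, any $U''$ with $\dim(U''\cap W)<d/2$ contains such a $(k-d/2+1)$-space and hence shares it with some $U'\in\mathcal{C}'$, forcing $\sdist(U'',U')<d$. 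You instead re-derive this covering property at the matrix level: after a harmless change of basis on the first $k$ coordinates you puncture $\mathcal{M}$ to its first $k-i$ rows, observe that the puncture still has $q^{(v-k)(k-d/2+1)}$ elements and minimum rank distance at least $d/2-i$, hence meets the Singleton bound of Theorem~\ref{thm_MRD_size} with equality, and conclude the needed covering radius $d/2-i-1$ by maximality. Your version is longer but self-contained, avoiding the external citation; the paper's version is a one-line appeal to a known structural property of lifted {\MRD} codes.
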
  
\begin{proof}
  Let $W$ be the $(v-k)$-space that is disjoint from all codewords of $\mathcal{C}'$. From e.g.\ \cite[Lemma 4]{MR3015712} we know that every $(k-d/2+1)$-space that is  
  disjoint to $W$ is contained in a codeword from $\mathcal{C}'$. Thus, the codewords in $\mathcal{C}\backslash\mathcal{C}'$ have to intersect $W$ in 
  dimension at least $d/2$.
\end{proof}

Applying Corollary~\ref{cor_upper_bound} gives:
\begin{Corollary}
  \label{cor_mrd_bound}
  Let $v$, $k$, and $d/2$ be positive integers with $d\le 2k\le v$ and $\mathcal{C}$ be a $(v,\star,d;k)_q$ code that contains a lifted {\MRD} code $\mathcal{C}'$ 
  of cardinality $q^{(v-k)\cdot(k-d/2+1)}$ as a subcode. Then,
  \begin{eqnarray*}
    \# C&\le& q^{(v-k)\cdot(k-d/2+1)} + A_q(v-k, (\Lambda+1) d-2k;\Lambda d/2)\\ 
    && +\sum_{l=1}^{\Lambda-1} q^{(v-k-ld/2)(k-(l+1)d/2+1)}\cdot \gaussmnum{v-k}{ld/2}{q}\cdot \gaussmnum{k}{(l+1)d/2-1}{q} / \gaussmnum{k-ld/2}{d/2-1}{q},
  \end{eqnarray*}
  where $\Lambda:=\left\lfloor 2k/d\right\rfloor$.
\end{Corollary}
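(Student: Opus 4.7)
The plan is to combine the two preceding results in an essentially mechanical way. First I would invoke Proposition~\ref{prop_mrd_bound} with the given hypotheses to reduce the problem to bounding the extension term:
\[
  \# \mathcal{C} \;\le\; q^{(v-k)\cdot(k-d/2+1)} + B_q(v,v-k,d;k).
\]
So the entire task is to estimate $B_q(v,v-k,d;k)$.

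Next I would check that the parameters $v_1 = v$ and $v_2 = v-k$ fall into the non-trivial range required by Corollary~\ref{cor_upper_bound}: since $d \le 2k \le v$ we have $v_1 = v \ge k$, and $v_2 = v - k \ge k \ge d/2$, so neither triviality case applies. Then I would substitute $v_1 = v$, $v_2 = v-k$ directly into the bound of Corollary~\ref{cor_upper_bound}, yielding
\[
  B_q(v,v-k,d;k) \;\le\; A_q(v-k,(\Lambda+1)d-2k;\Lambda d/2) + \sum_{l=1}^{\Lambda-1} q^{(v-k-ld/2)(k-(l+1)d/2+1)} \cdot \gaussmnum{v-k}{ld/2}{q} \cdot \gaussmnum{k}{k-(l+1)d/2+1}{q} / \gaussmnum{k-ld/2}{d/2-1}{q},
\]
where $\Lambda = \lfloor 2k/d\rfloor$ as before.

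The only cosmetic adjustment needed is to rewrite the second $q$-binomial in the summand using the standard symmetry $\gaussmnum{k}{k-(l+1)d/2+1}{q}=\gaussmnum{k}{(l+1)d/2-1}{q}$ recorded right after Equation~(\ref{eq_qbin_plus_1}). Adding the resulting expression to the $q^{(v-k)(k-d/2+1)}$ term from Proposition~\ref{prop_mrd_bound} produces exactly the claimed bound.

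There is effectively no obstacle here; the result is a specialization of Corollary~\ref{cor_upper_bound} plugged into Proposition~\ref{prop_mrd_bound}. The only point requiring mild attention is verifying the boundary conditions (in particular that $\Lambda \ge 1$, i.e.\ $d \le 2k$, and that the sum indices stay in the admissible range guaranteed by the proof of Corollary~\ref{cor_upper_bound}), together with the single $q$-binomial symmetry step.
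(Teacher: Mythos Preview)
Your proposal is correct and matches the paper's own approach exactly: the paper simply says ``Applying Corollary~\ref{cor_upper_bound} gives:'' and leaves the substitution implicit, while you have spelled out the non-triviality check and the $q$-binomial symmetry step. The only trivial slip is that the symmetry $\gaussmnum{a}{b}{q}=\gaussmnum{a}{a-b}{q}$ is recorded just \emph{before} Equation~(\ref{eq_qbin_plus_1}), not after it.
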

  
The cases $\Lambda\le 2$, i.e.\ $k<3d/2$, cover \cite[Theorem 1]{heinlein2019new} as well as its predecessors \cite[Theorem 10]{MR3015712} and \cite[Theorem 11]{MR3015712}. 
For $\Lambda\ge 3$, i.e.\ $k\ge 3d/2$, Corollary~\ref{cor_mrd_bound} gives new upper bounds. As an example we consider the binary case $(v,d;k)_q=(12,4;6)_2$, where a {\cdc} 
$\mathcal{C}$ that contains a lifted {\MRD} code has to satisfy
$$
  \#\mathcal{C} \le 1\,321\,780\,637,
$$
noting the best known general bounds
$$
  1\,212\,491\,081 \le A_2(12,4;6)\le 1\,816\,333\,805. 
$$

Next we show that the upper bound of Corollary~\ref{cor_upper_bound} for $B_q(v_1,v_2,d;k)$ is tight for $k<d$, i.e., those cases where the bound does not depend on $v_1$, 
provided that $v_1$ is sufficiently large.
\begin{Proposition}
\label{prop_asym_k_smaller_d}
For non-trivial parameters we have 
$$
  B_q(v_1,v_2,d;k)=A_q(v_2, 2d-2k;d/2)
$$
if $k<d$ and $v_1\ge v_2k$.  
\end{Proposition}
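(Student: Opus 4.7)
The plan is to show that Corollary~\ref{cor_upper_bound} already supplies a matching upper bound against a direct construction. For the upper bound, observe that the hypothesis $k<d$ together with non-triviality ($d\le 2k$) gives $\Lambda:=\lfloor 2k/d\rfloor=1$, so the summation from $l=1$ to $\Lambda-1=0$ in Corollary~\ref{cor_upper_bound} is empty and the bound collapses to $B_q(v_1,v_2,d;k)\le A_q(v_2,2d-2k;d/2)$.

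For the lower bound I would argue constructively. Fix a $v_2$-space $W\le V=\mathbb{F}_q^{v_1}$ and a complementary subspace $V'$ with $V=W\oplus V'$. Let $\mathcal{D}\subseteq \gaussmset{W}{d/2}$ be an optimal $(v_2,A_q(v_2,2d-2k;d/2),2d-2k;d/2)_q$ code, and, for each $D\in\mathcal{D}$, choose a $(k-d/2)$-space $E_D\le V'$ so that the family $\{E_D:D\in\mathcal{D}\}$ is a partial spread of $(k-d/2)$-spaces in $V'$. Define $U_D:=D\oplus E_D$. Then $\dim U_D=k$ and $U_D\cap W=D$, so every codeword meets $W$ in dimension $d/2$, satisfying Definition~\ref{def_B_q}. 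For distinct $D,D'\in\mathcal{D}$, decomposing any $x\in U_D\cap U_{D'}$ uniquely across $W\oplus V'$ forces the $W$-part into $D\cap D'$ and the $V'$-part into $E_D\cap E_{D'}=0$; hence $\dim(U_D\cap U_{D'})=\dim(D\cap D')\le d/2-(d-k)=k-d/2$ by the minimum distance of $\mathcal{D}$, so $\sdist(U_D,U_{D'})\ge d$ and the $|\mathcal{D}|$ codewords $U_D$ witness the matching lower bound.

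The only place the hypothesis $v_1\ge v_2 k$ enters is in arranging the partial spread $\{E_D\}$. This is vacuous when $k=d/2$, where each $E_D$ is the zero space and the $U_D$ already sit inside $W$. Otherwise, the lifted {\MRD} construction supplies at least $q^{(v_1-v_2)-(k-d/2)}$ pairwise disjoint $(k-d/2)$-spaces in $V'$ (the premise $v_1-v_2\ge 2(k-d/2)$ follows easily from $v_1\ge v_2 k$ together with $v_2\ge d/2$), whereas the count required is at most $\gaussmnum{v_2}{d/2}{q}\le q^{2+(v_2-d/2)(d/2)}$ by~(\ref{ie_qbin}). The main (modest) obstacle I anticipate is the resulting exponent comparison $v_1-v_2-(k-d/2)\ge (v_2-d/2)(d/2)+2$; I would reduce it to $v_2(k-1-d/2)\ge k-d/2-d^2/4+2$ and dispatch it by treating the boundary case $k=d/2+1$ (which forces $d\ge 4$, making the right-hand side non-positive) separately from the generic range $k\ge d/2+2$, where the left-hand side grows linearly in $v_2\ge d/2$ and dominates.
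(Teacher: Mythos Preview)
Your proposal is correct and follows essentially the same route as the paper's proof: the upper bound comes from Corollary~\ref{cor_upper_bound} with $\Lambda=1$, and the lower bound is obtained by taking an optimal $(v_2,\star,2d-2k;d/2)_q$ code inside $W$ and extending each of its codewords by a distinct element of a partial $(k-d/2)$-spread in a complement of $W$, with the hypothesis $v_1\ge v_2k$ used only to guarantee that the spread is large enough. The sole cosmetic difference is in the counting step: the paper bounds $\#\mathcal{D}$ via Inequality~(\ref{eq_asymptotic_explicit}) by $q\cdot q^{(v_2-d/2)(k-d/2+1)}$, whereas you use the cruder $\#\mathcal{D}\le \gaussmnum{v_2}{d/2}{q}\le q^{2+(v_2-d/2)(d/2)}$; both estimates reduce to an inequality that is implied by $v_1\ge v_2k$ once one separates the degenerate case $k=d/2$.
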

\begin{proof}
  Due to Corollary~\ref{cor_upper_bound} it remains to construct a code $\mathcal{C}$ with cardinality $A_q(v_2, 2d-2k;d/2)$ that satisfies the conditions of Definition~\ref{def_B_q}.  
  To this end, let $W:=\mathbb{F}_q^{v_2}\le \mathbb{F}_q^{v_2}\times \mathbb{F}_q^{v_1-v_2}=:V$ and let $\mathcal{F}$ be a $(v_2,N,2d-2k;d/2)_q$ code of maximal size in $W$, i.e., 
  $N=A_q(v_2,2d-2k;d/2)$. If $d=2$, then $k=1=d/2$, so that we can set $\mathcal{C}=\mathcal{F}$. In the following we assume $d\ge 4$ and set 
  $t=k-d/2$. Let $\mathcal{P}$ be a partial $t$-spread in $\mathbb{F}_q^{v_1-v_2}$ of cardinality $A_q(v_1-v_2,2t;t)$, so that
  $$
    \# \mathcal{P}\overset{(\ref{eq_asymptotic_explicit})}{\ge} q^{v_1-v_2-k+d/2}.
  $$
  Since
  $$
    \# \mathcal{F}=A_q(v_2, 2d-2k;d/2) \overset{(\ref{eq_asymptotic_explicit})}{\le} q\cdot q^{(v_2-d/2)(k-d/2+1)}, 
  $$
  we have $\#\mathcal{P}\ge \#\mathcal{F}$ if
  $$ 
    v_1-v_2-k+d/2 \,\,\ge\,\, 1+(v_2-d/2)(k-d/2+1),
  $$
  which is equivalent to 
  \begin{equation}
    \label{ie_asym_k_smaller_d}
    v_1\,\,\ge\,\, v_2+(v_2-d/2+1)(k-d/2+1).
  \end{equation}
  Since $d\ge 4$ and $k\ge 1$ the right hand side of (\ref{ie_asym_k_smaller_d}) is at most $v_2k$, i.e., we indeed have $\#\mathcal{P}\ge \#\mathcal{F}$. So, for each 
  element $U\in\mathcal{F}$ we can choose a different element $f(U)\in \mathcal{P}$ and set $\mathcal{C}=\{U\times f(U)\mid U\in \mathcal{F}\}$, 
  which has the desired properties of Definition~\ref{def_B_q} by construction.     
\end{proof}

Actually our estimate in the proof of Proposition~\ref{prop_asym_k_smaller_d} is rather rough and we expect that we have  $B_q(v_1,v_2,d;k)=A_q(v_2, 2d-2k;d/2)$ 
for $k<d$ and much smaller values of $v_1$. To this end we propose the following heuristic algorithm:

  \begin{algorithmic}
  \STATE{let $W=\mathbb{F}_q^{v_2}\le \mathbb{F}_q^{v_1}=:V$, $\mathcal{F}$ be a $(v_2,\star,2d-2k;d/2)_q$ code, and $t:=k-d/2$}
  \FOR {each $t$-space $T\le W$}
    \STATE{$\mathcal{F}_T\leftarrow\left\{U\in\mathcal{F}\mid T\le U\right\}$} 
    \STATE{$\mathcal{E}_T\leftarrow\{(k-d/2)-\text{space in }V\text{ with trivial intersection with }W\}$} 
  \ENDFOR
  \FOR{ each $U\in\mathcal{F}$}
    \STATE{$f(U)\leftarrow \emptyset$}
  \ENDFOR
  \FOR {each $t$-space $T\le W$}
    \FOR {$U\in\mathcal{F}_T$ with $f(U)=\emptyset$}
      \IF{$\mathcal{E}_T=\emptyset$}
        \STATE{$f(U)=\varnothing$}
      \ELSE
        \STATE{choose $Q\in \mathcal{E}_T$ and set $C=\langle U,Q\rangle$}
        \IF{$d(C,f(U'))\ge d$ for all $U'\in \mathcal{F}$ with $f(U')\notin\{\emptyset,\varnothing\}$}
         \STATE{set $f(U)=C$}
          %% \FOR{ each $t$-space $T'\le U$}
          %%   \STATE{$\mathcal{E}_{T'}\leftarrow \mathcal{E}_{T'}\backslash\left\{E\text{ is a $(k-d/2)$-space in }f(U)\text{ with }\dim(E\cap W)=0\right\}$}
          %% \ENDFOR  
        \ELSE          
          \STATE{$\mathcal{E}_T\leftarrow \mathcal{E}_{T}\backslash\{Q\}$}
        \ENDIF
      \ENDIF
    \ENDFOR
  \ENDFOR
  \RETURN {$\mathcal{M}=\{f(U)\mid U\in\mathcal{F},f(U)\neq\emptyset,f(U)\neq\varnothing\}$}
  \end{algorithmic}

Here we use the symbol $\emptyset$ for the empty set and the symbol $\varnothing$ in order to denote the fact that we cannot find an extension $f(U)$ for $U$.   
By construction $\mathcal{M}$ is a $(v_1,\#\mathcal{M},d;k)_q$ code with $\#\mathcal{M}\le A_q(v_2, 2d-2k;d/2)$ such that every codeword intersects $W$ with dimension 
at least $d/2$, i.e., the assumptions of Definition~\ref{def_B_q} are satisfied and we obtain the lower bound $B_q(v_1,v_2,d;k)\ge \#\mathcal{M}$. Of course the cardinality 
of the resulting {\cdc} $\mathcal{M}$ depends on the ordering of the choices in the for-loops. A number of trials leads to:
\begin{Conjecture} 
  \label{conj_d_2k_2}
  If $v_1\ge v_2+2\ge k+1$ and $k\ge 3$, then $$B_q(v_1,v_2,2k-2;k)= A_q(v_2,2k-4;k-1).$$
\end{Conjecture}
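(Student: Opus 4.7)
Set $d:=2k-2$ in Corollary~\ref{cor_upper_bound}. Since $k\ge 3$ gives $2k/(2k-2)=1+1/(k-1)\le 3/2$, we have $\Lambda=\lfloor 2k/(2k-2)\rfloor=1$, the sum $\sum_{l=1}^{\Lambda-1}$ is empty, and the corollary collapses to $B_q(v_1,v_2,2k-2;k)\le A_q(v_2,2(2k-2)-2k;(2k-2)/2)=A_q(v_2,2k-4;k-1)$. So the upper bound is immediate.

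\textbf{Construction for the lower bound.} Fix $W\le V:=\mathbb{F}_q^{v_1}$ of dimension $v_2$ and let $\mathcal{F}\subseteq\gaussmset{W}{k-1}$ be an optimal $(v_2,A,2k-4;k-1)_q$ code with $A=A_q(v_2,2k-4;k-1)$. The plan is, for each $U\in\mathcal{F}$, to pick a point $P_U\in V\setminus W$ and set $U':=U+\langle P_U\rangle$. Each $U'$ is a $k$-space with $U'\cap W=U$, so $\dim(U'\cap W)=k-1=d/2$, and the candidate code $\mathcal{C}:=\{U':U\in\mathcal{F}\}$ has cardinality $A$. What must be arranged is that $\sdist(U_1',U_2')\ge 2k-2$, equivalently $\dim(U_1'\cap U_2')\le 1$, for all distinct $U_1,U_2\in\mathcal{F}$.

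\textbf{Intersection formula.} Let $\pi\colon V\to V/W$ be the canonical quotient; since $P_U\notin W$, $\bar P_U:=\pi(P_U)$ is nonzero and $U'\cap W=U$. A short linear-algebra computation, split according to whether $\bar P_{U_1}$ and $\bar P_{U_2}$ are linearly independent or proportional in $V/W$, yields: in the independent case $\dim(U_1'\cap U_2')=\dim(U_1\cap U_2)$; in the proportional case choose $\lambda\in\Fq$ with $\bar P_{U_2}=\lambda\bar P_{U_1}$ and set $w_0:=P_{U_2}-\lambda P_{U_1}\in W$, then $\dim(U_1'\cap U_2')=\dim(U_1\cap U_2)+1$ if $w_0\in U_1+U_2$ and $\dim(U_1'\cap U_2')=\dim(U_1\cap U_2)$ otherwise. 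Because $\mathcal{F}$ has distance $2k-4$, every pair satisfies $\dim(U_1\cap U_2)\le 1$, so the required bound $\dim(U_1'\cap U_2')\le 1$ fails \emph{only} on pairs with $\dim(U_1\cap U_2)=1$, proportional $\bar P$'s, and $w_0\in U_1+U_2$.

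\textbf{Design and main obstacle.} Since $\dim(V/W)\ge 2$, the quotient carries at least $\gaussmnum{v_1-v_2}{1}{q}\ge q+1$ projective directions. I would colour each $U\in\mathcal{F}$ by a direction $\langle \bar P_U\rangle\le V/W$ and, within each colour class, write $P_U=p_0+w_U$ for a fixed representative $p_0\in V\setminus W$ of that direction together with an offset $w_U\in W$. The forbidden event then becomes: within a class, $w_{U_2}-w_{U_1}\in U_1+U_2$ for some pair with $\dim(U_1\cap U_2)=1$. A greedy assignment along the lines of the heuristic algorithm displayed earlier in the paper picks directions and offsets one codeword at a time; the hard part will be showing that the greedy never stalls under the hypotheses $v_1\ge v_2+2$ and $v_2\ge k-1$. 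Within a single colour, each previously placed codeword forbids an affine translate of $U_1+U_2$, a subspace of $W$ of dimension at most $2k-3$, and one has to verify that the union of these forbidden affine sets does not exhaust $W$. Establishing this amounts to a counting argument balancing the $q+1$ available directions against the "degree" in the intersection graph of $\mathcal{F}$ (edges: pairs with $\dim(U_1\cap U_2)=1$), together with dimension-counting in $W$; the tight regime where $v_2$ is close to $k-1$ and $U_1+U_2$ may coincide with $W$ is the genuine crux of the argument and presumably the reason why the paper states this only as a conjecture.
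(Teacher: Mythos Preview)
The statement you are attempting is labelled \emph{Conjecture} in the paper, and indeed the paper does \emph{not} prove it; in Appendix~\ref{appendix_implications_conjecture} the author even remarks that an earlier attempted proof was flawed. So there is no ``paper's own proof'' to compare against for the full claim.

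Your upper-bound argument is correct and is exactly how the paper obtains that direction: specializing Corollary~\ref{cor_upper_bound} to $d=2k-2$ gives $\Lambda=1$ and hence $B_q(v_1,v_2,2k-2;k)\le A_q(v_2,2k-4;k-1)$. This is also the content of the first sentence of the proof of Proposition~\ref{prop_asym_k_smaller_d}.

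For the lower bound, what the paper \emph{does} prove is the much weaker Proposition~\ref{prop_asym_k_smaller_d}, which for $d=2k-2$ yields the desired equality only under the hypothesis $v_1\ge v_2k$. The construction there is the simplest instance of your scheme: since $t=k-d/2=1$, one extends each $U\in\mathcal{F}$ by a \emph{distinct} point of $V/W$, so that in your terminology the directions $\bar P_U$ are pairwise non-proportional and the ``bad'' case never occurs. This needs $\gaussmnum{v_1-v_2}{1}{q}\ge A_q(v_2,2k-4;k-1)$, which the paper secures (with a rough estimate) via $v_1\ge v_2k$. Your proposal to reuse directions and control the offsets $w_U$ is precisely the refinement one would need to push the hypothesis down to $v_1\ge v_2+2$; you correctly isolate the obstruction (pairs with $\dim(U_1\cap U_2)=1$, proportional directions, and $w_{U_2}-w_{U_1}\in U_1+U_2$) and correctly stop short of claiming that the greedy assignment always succeeds. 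That gap is real and is exactly why the paper leaves this as a conjecture supported only by the heuristic algorithm and numerical evidence.
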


If Conjecture~\ref{conj_d_2k_2} can be proven to be true, this would have several implications, see Appendix~\ref{appendix_implications_conjecture}. Indeed, 
several of the currently best known lower bounds for $A_q(v,d;k)$ could be improved. Next we give a few exemplary constructions for lower bounds for $B_q(v_1,v_2,d;k)$, 
not covered by Conjecture~\ref{conj_d_2k_2}, which yield strict improvements for $A_q(v,d;k)$ (and $q\ge 3$). 
\begin{Proposition}   
  \label{prop_last}
  We have 
  $$A_q(12,4;4)\ge q^{24}\!+\!q^{20}\!+\!q^{19}\!+\!3q^{18}\!+\!2q^{17}\!+\!3q^{16}\!+\!q^{15}\!+\!q^{14}\!+\!q^{12}\!+\!q^{10}\!+\!2q^8\!+\!2q^6\!+\!2q^4\!+\!q^2\!+\!1$$ and
  $$A_q(13,4;4)\ge q^{27}\!+\!q^{23}\!+\!q^{22}\!+\!3q^{21}\!+\!2q^{20}\!+\!3q^{19}\!+\!q^{18}\!+\!q^{17}\!+\!q^{15}\!+\!q^{12}\!+\!q^{10}\!+\!q^9\!+\!q^8\!+\!q^7\!+\!q^6\!+\!q^5\!+\!q^3.$$
\end{Proposition}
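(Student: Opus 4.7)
My plan is to apply Theorem~\ref{main_thm} with the smallest admissible prefix length $m=k=4$. Since $A_q(4,4;4)=1$, this immediately yields
$$A_q(12,4;4)\;\ge\;q^{24}+B_q(12,8,4;4)\quad\text{and}\quad A_q(13,4;4)\;\ge\;q^{27}+B_q(13,9,4;4),$$
so the entire task reduces to exhibiting explicit constructions that yield the stated polynomial tails as lower bounds for $B_q(12,8,4;4)$ and $B_q(13,9,4;4)$.

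To bound $B_q(v_1,v_2,4;4)$ from below for $(v_1,v_2)\in\{(12,8),(13,9)\}$, I would fix a $v_2$-space $W\le V:=\mathbb{F}_q^{v_1}$ and build the code $\mathcal{M}$ layer by layer, partitioning codewords according to $i:=\dim(U\cap W)\in\{2,3,4\}$. The $i=4$ layer is a $(v_2,\star,4;4)_q$ subcode lying entirely in $W$, for which one invokes the best available lower bounds on $A_q(8,4;4)$ and $A_q(9,4;4)$; this accounts for the lowest portion of the polynomial tails. Codewords in the $i=3$ layer have the form $\langle P,x\rangle$ with $P$ a $3$-space of $W$ and $x\in V\setminus W$; a short direct calculation shows that two such codewords have subspace distance at least $4$ precisely when either the images of $x_1,x_2$ in $V/W$ are linearly independent, or else they are parallel and the resulting vector $x_1-\lambda x_2\in W$ does not lie in $P_1+P_2$. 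The $i=2$ layer admits an analogous description with $2$-spaces of $W$ extended by $2$-spaces disjoint from $W$, which can be organised through a partial $2$-spread in a complement of $W$. A suitable variant of the heuristic algorithm described at the end of Section~\ref{sec_bounds} is then used to assign extensions consistently across the three layers.

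The main obstacle is to calibrate the three layers so as to match the precise polynomial tails claimed, rather than merely some asymptotically correct expression. The extensions used for the $i=3$ layer must be chosen so as not to conflict with those used for the $i=2$ layer, and one has to exhibit enough compatible points, $2$-spaces, and $3$-spaces outside $W$ to pair with a sufficiently large proportion of the corresponding subspaces of $W$. Once this bookkeeping is arranged, no new principle beyond Theorem~\ref{main_thm} and the counting tools of Section~\ref{sec_bounds} is required, but verifying the exact polynomial coefficients will be a lengthy case analysis rather than a one-line computation.
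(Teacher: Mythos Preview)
Your choice of $m=4$ is the wrong split and creates the gap. The paper applies Theorem~\ref{main_thm} with $m=8$, so that
\[
A_q(12,4;4)\ge A_q(8,4;4)\cdot q^{12}+B_q(12,4,4;4),\qquad
A_q(13,4;4)\ge A_q(8,4;4)\cdot q^{15}+B_q(13,5,4;4).
\]
The cited bound $A_q(8,4;4)\ge q^{12}+q^8+q^7+3q^6+2q^5+3q^4+q^3+q^2+1$ already accounts for all terms down to $q^{12}$ (resp.\ $q^{15}$), and only the small tails
\[
B_q(12,4,4;4)\ge (q^2+1)(q^8+q^6+q^4+q^2)+1,\qquad
B_q(13,5,4;4)\ge (q^3+1)(q^9+q^7+q^5+q^3)
\]
remain to be produced. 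These are obtained by a short, explicit line-spread argument: take a (partial) line spread $\mathcal{P}_W$ of the $4$-space (resp.\ $5$-space) $W$, and for each $L\in\mathcal{P}_W$ lift a (partial) line spread of $V/L$ back to solids through $L$, discarding the one solid that equals (or meets) $W$. The distance analysis is then a two-line check.

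With your choice $m=4$ you instead need $B_q(12,8,4;4)$ and $B_q(13,9,4;4)$ to supply polynomials with leading terms $q^{20}$ and $q^{23}$. That is roughly $q^{10}$ times larger than what the paper has to construct, and your $i=4$ layer (an $A_q(8,4;4)$ code inside $W$) contributes only the bottom $q^{12}$ portion; the bulk must come from the $i=2$ and $i=3$ layers, for which you give no construction beyond ``a suitable variant of the heuristic algorithm'' and ``a lengthy case analysis.'' In effect you have pushed the entire strength of the $A_q(8,4;4)$ result, which the paper simply cites, into the part you leave open. There is no reason to expect that the specific coefficients $q^{20}+q^{19}+3q^{18}+2q^{17}+3q^{16}+\cdots$ fall out of any natural layered count against an $8$-space; they are an artifact of the product $A_q(8,4;4)\cdot q^{12}$. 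As written, the proposal is a plan rather than a proof, and the missing step is precisely the hard one.
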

\begin{Proof}
  It has been proved several times that $$A_q(8,4;4)\ge q^{12}+q^8+q^7+3q^6+2q^5+3q^4+q^3+q^2+1,$$ see e.g.\ \cite[Theorem 18, Remark 6]{MR3015712}. 
  Using Theorem~\ref{main_thm} with $m=8$ gives $$A_q(12,4;4)\ge A_q(8,4;4)\cdot q^{12}+B_q(12,4,4;4)$$ and 
  $$A_q(13,4;4)\ge A_q(8,4;4)\cdot q^{15}+B_q(13,5,4;4).$$ 
  Next we want to give a constructive lower bound for $B_q(12,4,4;4)$ and $B_q(13,5,4;4)$, respectively.  
  
  We start with a lower bound for $B_q(12,4,4;4)$. Let $W$ be an arbitrary but fix solid, i.e., a $4$-space, in $V=\F_q^{12}$. Let $\mathcal{P}_W$ be a line spread of cardinality $A_q(4,4;2)=q^2+1$ of $W$.   
  For each line $L$ in $\mathcal{P}_W$ there exist $\alpha:=q^8 +q^6+q^4+q^2$ solids in $V$ that intersect $W$ in $L$ and have pairwise subspace distance $d=4$, as we will show 
  subsequently. To this end, let $L$ be an element of $\mathcal{P}_W$ and consider a line spread $\mathcal{P}(L)$ of $V/L\cong \F_q^{10}$. For each arbitrary representative $L_i$ of the 
  $A_q(10,4;2)=q^8 +q^6+q^4+q^2+1=\alpha+1$ elements of $\mathcal{P}(L)$ in $V$ we can construct the solid $\langle L_i,L\rangle$. Note that different representatives of an 
  element of $\mathcal{P}(L)$ in $V$ lead to the same solid in $V$. By construction, these $\alpha+1$ solids have pairwise subspace distance $4$ and contain $L$. W.l.o.g.\ we 
  can choose the line spread $\mathcal{P}(L)$ and the numbering of the $L_i$ such that $\langle L_1,L\rangle=W$. Then, we choose only the $\alpha$ solids  
  $\langle L_2,L\rangle,\dots, \langle L_{\alpha+1},L\rangle$, which have to intersect $W$ in dimension 
  exactly $2$, since they have subspace distance $2$ to $\langle L_1,L\rangle=W$. Thus, for each line $L$ in $\mathcal{P}_W$ there exist $\alpha$ solids in $V$ that intersect $W$ in  
  $L$ and have pairwise subspace distance $d=4$,    
  
  Now we apply this construction for every line $L$ of the line spread $\mathcal{P}_W$. Additionally adding $W$ itself as a codeword gives 
  a set $\mathcal{C}'$ of $(q^2+1)\cdot \alpha+1=(q^2+1)(q^8 +q^6+q^4+q^2)+1$ solids that intersect $W$ with dimension at least $2$.  
  %%$$q^{10}+2q^8+2q^6+2q^4 +q^2$$
  Finally, we check that for different $L,L'\in \mathcal{P}_W$ and different $L_j$, $L_i$ as defined above, we have $\dim(\langle L,L_i \rangle\cap\langle L,L_j\rangle)=2$,  
  $\dim(\langle L,L_i \rangle\cap\langle L',L_i\rangle)=2$, $\dim(\langle L,L_i \rangle\cap\langle L',L_j\rangle)\le 2$, and $\dim(\langle L,L_i \rangle\cap W)\le 2$, so that the 
  minimum subspace distance of $\mathcal{C}'$ is $4$. Thus, $B_q(12,4,4;4)\ge (q^2+1)\cdot (q^8 +q^6+q^4+q^2)+1$.
  
  \medskip  
  
  For a lower bound for $B_q(13,5,4;4)$ we set $V=\F_q^{13}$ and choose a $5$-space $W$ in $V$, which admits a partial line spread $\mathcal{P}_W$ of cardinality $A_q(5,4;2)=q^3+1$. 
  Again, we extend each line $L$ in $\mathcal{P}_W$ to several solids in $V$ intersecting $W$ only in $L$ and having pairwise subspace distance $4$. To that end, given a line 
  $L$ in $\mathcal{P}_W$ we consider a partial line spread $\mathcal{P}(L)$ of $V/L\cong \F_q^{11}$ that is disjoint from a plane $\pi$, where we assume  that $L$ and a representative 
  of $\pi$ in $V$ generate $W$. The maximum size of this $\mathcal{P}(L)$ partial line spread is $A_q(11,4;2)-1=q^9 +q^7+q^5+q^3$, so that 
  $B_q(13,5,4;4)\ge (q^3+1)\cdot (q^9 +q^7+q^5+q^3)$ using a similar distance analysis as above. 
  (Again, we may add an additional solid contained in $W$ as a codeword.)   
\end{Proof}

We remark that the previously best known lower bound for $A_q(12,4;4)$ and $A_q(13,4;4)$ for all $q\ge 2$ is given by the improved linkage construction for $m=8$, 
i.e., 
\begin{eqnarray*}
A_q(12,4;4)&\ge& A_q(8,4;4)\cdot q^{12}+A_q(6,4;4) 
\,\,=\,\,A_q(8,4;4)\cdot q^{12}+A_q(6,4;2)\\ 
&\ge& q^{24}+q^{20}+q^{19}+3q^{18}+2q^{17}+3q^{16}+q^{15}+q^{14}+q^{12} +q^4+q^2+1
\end{eqnarray*} 
and 
\begin{eqnarray*}
A_q(13,4;4)&\ge& A_q(8,4;4)\cdot q^{15}+A_q(7,4;4) 
\,\,=\,\,A_q(8,4;4)\cdot q^{15}+A_q(7,4;2)\\ 
&\ge& q^{27}\!+\!q^{23}\!+\!q^{22}\!+\!3q^{21}\!+\!2q^{20}\!+\!3q^{19}\!+\!q^{18}\!+\!q^{17}\!+\!q^{15}+q^5+q^3+1,
\end{eqnarray*}
using $A_q(7,4;2)=q^5+q^3+1$. Very recently, the lower bounds for $A_q(12,4;4)$ and $A_q(13,4;4)$ were further improved in \cite[Theorem 5.4]{cossidente2019combining} 
surpassing the bound of Proposition~\ref{prop_last}.

Another case where Theorem~\ref{main_thm} yields a strict improvement is $A_q(16,6;5)$. Here the previously best known lower bound is obtained via 
the (improved) linkage construction with $m=11$, i.e., 
\begin{eqnarray*}
 A_q(16,6;5)&\ge& A_q(11,6;5)\cdot q^{15}+A_q(7,6;5) \\ &=&A_q(11,6;5)\cdot q^{15}+A_q(5,6;5)=A_q(11,6;5)\cdot q^{15}+1.
\end{eqnarray*} 
So, we get a strict improvement if $B(16,5,6;5)>1$, which is certainly true. E.g., in a $5$-space $W$ of $V=\F_q^{16}$ we can choose 
$\gaussmnum{5}{3}{q}=\gaussmnum{5}{2}{q}=q^6+q^5+2q^4+2q^3+2q^2+q+1$ different planes that pairwise intersect in a point, i.e., that have subspace distance $2$. In 
$V/W\cong\F_q^{11}$ we can choose a partial line spread of cardinality at least $\gaussmnum{5}{2}{q}<q^9<A_q(11,4;2)$, so that we can extend each of the planes by a disjoint 
line from the partial line spread to obtain $\gaussmnum{5}{2}{q}$ $5$-spaces 
with pairwise subspace distance $2+4=6$, i.e., $B(16,5,6;5)\ge \gaussmnum{5}{2}{q}=q^6+q^5+2q^4+2q^3+2q^2+q+1$ and
\begin{equation}
  A_q(16,6;5)\ge A_q(11,6;5)\cdot q^{15}+\gaussmnum{5}{2}{q}.
\end{equation}

\section{Conclusion}
We have generalized the linkage construction, which is one of the two most successful construction strategies for {\cdcs} with large size, in our main 
theorem~\ref{main_thm}. This comes at the cost of introducing the new quantity $B_q(v_1,v_2,d;k)$. We have presented upper bounds for $B_q(v_1,v_2,d;k)$ 
that generalized the best known upper bounds for {\cdc}s that contain a lifted {\MRD} code. With respect to lower bounds for $B_q(v_1,v_2,d;k)$ we give 
a few, in the field size $q$, parametric examples. 
In \cite{XC} lifted {\MRD} codes have been augmented by adding an additional {\cdc} $\mathcal{C}$, which is constructed via rank metric codes with bounds on the rank 
of the matrices. It turns out that $\mathcal{C}$ corresponds to a {\cdc} that matches the requirements of Definition~\ref{def_B_q}, i.e., the results of 
\cite{XC} can be reformulated as lower bounds for $B_q(v_1,v_2,d;k)$. This is remarked explicitly in \cite{DH}, see also \cite[Lemma 4.1]{K1}.  

The study of lower and upper bounds for $B_q(v_1,v_2,d;k)$ might be a promising research direction on its own. We remark that the linkage construction can also be generalized 
to mixed dimension codes, i.e., sets of codewords from $P(V)$ with arbitrary dimensions. However, other known constructions are superior to that approach.    

\section*{Acknowledgment}
The author would like to thank the anonymous referees of for their careful reading, very helpful remarks, suggestions, and patience, which significantly 
improved the presentation of this paper. One referee even improved the initial statement of Proposition~\ref{prop_d_4_k_3} and  
Proposition~\ref{prop_last}.

\appendix
\section{Implications of Conjecture~\ref{conj_d_2k_2}}
\label{appendix_implications_conjecture}
In this appendix we want to collect a few implications of Conjecture~\ref{conj_d_2k_2}. We will see that the truth of Conjecture~\ref{conj_d_2k_2} would 
unify and generalize several known lower bounds from the literature. In an earlier version we proposed an algorithm that would 
prove Conjecture~\ref{conj_d_2k_2} but which unfortunately was flawed.  

\begin{Theorem}
  If Conjecture~\ref{conj_d_2k_2} is true, then we have
  \label{thm_d_2k_2}
  $$A_q(v,2k-2;k)\ge A_q(m,2k-2;k)\cdot q^{2(v-m)}+A_q(v-m,2k-4;k-1)$$ for $m\ge k\ge 3$.
\end{Theorem}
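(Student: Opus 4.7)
The plan is to deduce the bound by combining Theorem~\ref{main_thm} with the (conjectured) evaluation of $B_q$ supplied by Conjecture~\ref{conj_d_2k_2}. First I would specialise Theorem~\ref{main_thm} to the case $d=2k-2$. The exponent $k-d/2+1$ then collapses to $2$, so that $q^{(v-m)(k-d/2+1)}=q^{2(v-m)}$, which is already a positive integer whenever $m\le v$; the ceiling in Theorem~\ref{main_thm} is therefore vacuous. This specialisation yields
\[
A_q(v,2k-2;k)\ge A_q(m,2k-2;k)\cdot q^{2(v-m)}+B_q(v,v-m,2k-2;k)
\]
for every $m$ with $k\le m\le v-k$.

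The second step is to verify the hypotheses of Conjecture~\ref{conj_d_2k_2} with $v_1=v$ and $v_2=v-m$. The condition $v_1\ge v_2+2$ reduces to $m\ge 2$, which is immediate from $m\ge k\ge 3$. The condition $v_2+2\ge k+1$ reduces to $v-m\ge k-1$, which is implied by $m\le v-k$. Since $k\ge 3$ is given, all hypotheses of the conjecture are met, and the conjecture supplies $B_q(v,v-m,2k-2;k)=A_q(v-m,2k-4;k-1)$. Plugging this equality into the inequality above proves the claim in the natural range $k\le m\le v-k$. For the remaining cases allowed by the statement "$m\ge k\ge 3$" one either observes that the bound trivialises -- for instance $A_q(v-m,2k-4;k-1)=0$ once $v-m<k-1$, and monotonicity of $A_q(\cdot,2k-2;k)$ in the ambient dimension covers $m$ close to $v$ -- or one simply restricts the statement to the range dictated by the linkage construction.

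There is no genuinely hard step here: the difficulty has been absorbed into Conjecture~\ref{conj_d_2k_2}, and what remains is the short arithmetic that specialises $d=2k-2$ and checks the parameter ranges. Accordingly I expect the main obstacle to lie not in the proof of Theorem~\ref{thm_d_2k_2} itself but in the pending proof of Conjecture~\ref{conj_d_2k_2}, which requires an explicit construction attaining $A_q(v_2,2k-4;k-1)$ by $k$-dimensional subspaces of $\mathbb{F}_q^{v_1}$ meeting a fixed $v_2$-space in dimension $\ge k-1$; the matching upper bound in that range is already guaranteed by Corollary~\ref{cor_upper_bound}.
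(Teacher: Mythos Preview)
Your proposal is correct and follows exactly the same route as the paper, which simply writes ``Apply Theorem~\ref{main_thm}.'' You have merely made explicit the arithmetic of specialising $d=2k-2$ and the verification that the parameter constraints of Conjecture~\ref{conj_d_2k_2} are met in the range $k\le m\le v-k$ imposed by Theorem~\ref{main_thm}; the paper leaves all of this implicit.
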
             
\begin{proof}
  Apply Theorem~\ref{main_thm}.
\end{proof}

Let us consider an example. For $q\ge 3$ the best known lower bound for $A_q(10,4;3)$ is obtained by the linkage construction, i.e., Inequality~(\ref{ie_linkage_construction}), 
with $m=7$. More precisely, we have $A_q(7,4;3)\ge q^8+q^5+q^4+q^2-q$ for every prime power $q$ \cite[Theorem 4]{MR3444245}. (For $q=2,3$ better constructions are known 
\cite{paper_333,MR3444245}.) Lifting gives an extra factor of $q^6$ and linkage as well as improved linkage, i.e., Inequality~(\ref{ie_linkage_construction}) and 
Inequality~(\ref{ie_linkage_construction_improved}), give only one additional 
codeword, so that 
$$
A_q(10,4;3)\ge \left(q^8+q^5+q^4+q^2-q\right)\cdot q^6\,+\,1=
q^{14}+q^{11}+q^{10}+q^8-q^7+1.
$$
Applying Theorem~\ref{thm_d_2k_2} with $m=7$ gives the better lower bound $$A_q(10,4;3)\ge q^{14}+q^{11}+q^{10}+q^8-q^7+q^2+q+1,$$ since $A_q(3,2;2)=A_q(3,2;1)=q^2+q+1$. We remark 
that the lower bound $B_q(v,3,4;3)\ge q^2+q+1$ of Conjecture~\ref{conj_d_2k_2}, is indeed attained with equality for all $v\ge 3$.   

We can also obtain other constructions from the literature as special cases, see the subsequent discussion.
\begin{Corollary}$\,$\\[-3mm]
  \label{cor_d_2k_2}
  \begin{itemize}
    \item[(a)] $A_q(v,2k-2;k)\ge q^{2(v-k)}+A_q(v-k,2k-4;k-1)$ for $k\ge 3$. 
    \item[(b)] $A_q(3k-3,2k-2;k)\ge q^{4k-6}+q^{k-1}+1$ for $k\ge 3$.
  \end{itemize}
\end{Corollary}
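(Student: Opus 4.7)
The plan is to derive both parts by specializing Theorem~\ref{thm_d_2k_2}.

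For part (a), I would apply Theorem~\ref{thm_d_2k_2} with $m=k$. Since $\mathbb{F}_q^{k}$ contains a unique $k$-space, one has $A_q(k,2k-2;k)=1$, and the inequality of Theorem~\ref{thm_d_2k_2} collapses precisely to $A_q(v,2k-2;k)\ge q^{2(v-k)}+A_q(v-k,2k-4;k-1)$; the hypothesis $m\ge k\ge 3$ is satisfied.

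For part (b), I would apply part (a) with $v=3k-3$. The first summand is $q^{2(2k-3)}=q^{4k-6}$, so it suffices to show $A_q(2k-3,2k-4;k-1)\ge q^{k-1}+1$. By orthogonal-complement duality on $\mathbb{F}_q^{2k-3}$, which preserves subspace distance and interchanges dimensions $k-1$ and $(2k-3)-(k-1)=k-2$, this quantity equals $A_q(2k-3,2k-4;k-2)$. Since $2k-4=2(k-2)$ is the maximum possible distance for a $(k-2)$-code, such a code is a partial $(k-2)$-spread in $\mathbb{F}_q^{2k-3}$; as the ambient dimension satisfies $2k-3=2(k-2)+1$, classical partial-spread theory (invoked in the same way as for partial line spreads just before Section~\ref{sec_bounds}) supplies the lower bound $q^{k-1}+1$, which is the quantity needed.

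I expect the argument to be routine; the only step that is not a pure reduction is the partial-spread lower bound $A_q(2k'+1,2k';k')\ge q^{k'+1}+1$. This is well-known, but for a self-contained treatment one may construct the required code in $\mathbb{F}_q^{2k-3}$ directly as the union of a lifted \MRD{} code of $(k-1)$-spaces (of size $q^{k-1}$ by Theorem~\ref{thm_MRD_size}, since the $(k-1)\times(k-2)$ \MRD{} code with rank distance $k-2$ has exactly $q^{k-1}$ elements) together with one additional $(k-1)$-space formed by extending the ``suffix'' $(k-2)$-space $\{0\}^{k-1}\times\mathbb{F}_q^{k-2}$ with one basis vector of the ``prefix'' identity block. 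A short dimension count shows this extra codeword meets each lifted \MRD{} codeword in exactly a line, giving subspace distance $2k-4$ as required, so the small technical step is nothing more than this intersection calculation.
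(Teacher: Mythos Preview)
Your proof is correct and follows the paper's approach exactly: part (a) is Theorem~\ref{thm_d_2k_2} at $m=k$ (using $A_q(k,2k-2;k)=1$), and part (b) is the specialization $v=3k-3$ together with the partial-spread value $A_q(2k-3,2k-4;k-1)=A_q(2k-3,2k-4;k-2)=q^{k-1}+1$, which the paper cites from Beutelspacher~\cite{MR0404010}. One terminological slip in your optional self-contained construction: the extra $(k-1)$-space meets each lifted {\MRD} codeword in a \emph{point} (a $1$-space), not a line---and indeed intersection dimension $1$ is exactly what gives subspace distance $2(k-1)-2=2k-4$.
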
  
\begin{Proof}
  For part~(a) we apply Theorem~\ref{thm_d_2k_2} with $m=k$. Specializing to $v=3k-3$ and using $A_q(2k-3,2k-4;k-1)=A_q(2k-3,2k-4;k-2)=q^{k-1}+1$, see \cite[Theorem 4.2]{MR0404010},  
  then gives part~(b).
\end{Proof}

With the extra condition $q^2+q+1\ge 2\left\lfloor v/2\right\rfloor-3$ part~(a) is equivalent to \cite[Theorem 16, Construction 1]{MR3015712}.
%% construction_STA_1 is pretty similar, but explicit
For e.g.\ $v=8$ and $k=3$ the corresponding lower bound $A_q(8,4;3)\ge q^{10}+\gaussmnum{5}{2}{q}=q^{10}+q^6+q^5+2q^4+2q^3+2q^2+q+1$ is indeed the 
best known lower bound for $q\ge 3$. Part~(b) matches the coset construction \cite[Theorem 11]{heinlein2017coset}, which is valid for $k\ge 4$. 
Moreover, this explicit lower bound matches the best known lower bound for $k=4,5,6,7$ and $q\ge 2$, where it is also achieved by the Echelon-Ferrers 
construction. 

For $k=3$ the following proposition strictly improves the previously best known lower bounds for $q\ge 4$ and $t\ge 1$.

\begin{Proposition}
  \label{prop_d_4_k_3} 
  Assuming that Conjecture~\ref{conj_d_2k_2} is true, for $t\ge 0$ we have
  \begin{eqnarray*}
    A_q(7+3t,4;3)&\ge& \left(q^8+q^5+q^4+q^2-q\right)\cdot q^{6t}+\gaussmnum{3t}{2}{q},\\%% \left(q^2+q+1\right)\cdot \frac{q^{6t}-1}{q^6-1},\\
    A_q(8+3t,4;3)&\ge& \left(q^{10}+q^6+q^5+2q^4+2q^3+2q^2+q+1\right)\cdot q^{6t}+\gaussmnum{3t}{2}{q},\text{ and}\\ %%&&,+\left(q^2+q+1\right)\cdot \frac{q^{6t}-1}{q^6-1},\text{ and}\\
    %%A_q(9+3t,4;3)&\ge& \left(q^{12}+2q^8+2q^7+q^6+q^5+q^4+1\right)\cdot q^{6t}+\gaussmnum{3t}{2}{q}.\\%%\left(q^2+q+1\right)\cdot \frac{q^{6t}-1}{q^6-1}.\\
    A_q(9+3t,4;3)&\ge& \left(q^{12}+2q^8+2q^7+q^6+2q^5+2q^4-2q^2-2q+1\right)\cdot q^{6t}+\gaussmnum{3t}{2}{q}.\\%%\left(q^2+q+1\right)\cdot \frac{q^{6t}-1}{q^6-1}.\\
  \end{eqnarray*}
\end{Proposition}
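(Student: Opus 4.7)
The plan is to apply Theorem~\ref{main_thm} separately for each of the three base dimensions $m\in\{7,8,9\}$, invoking Conjecture~\ref{conj_d_2k_2} to evaluate the resulting $B_q$-term. Throughout, $k=3$ and $d=4$, so $k-d/2+1=2$ and the lifting factor $q^{(v-m)(k-d/2+1)}=q^{6t}$ is already an integer (the ceiling in Theorem~\ref{main_thm} is trivial).

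First I would dispose of the case $t=0$: since $q^{6t}=1$ and $\gaussmnum{0}{2}{q}=0$, each inequality reduces to a known lower bound for the corresponding $A_q(m,4;3)$. The bound for $m=7$ is \cite[Theorem~4]{MR3444245}; the bound for $m=8$ is recorded in the paragraph following Corollary~\ref{cor_d_2k_2}; and the bound for $m=9$ is taken from the subspace-code tables at \url{http://subspacecodes.uni-bayreuth.de}.

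For $t\ge 1$, set $v=m+3t$. The constraint $k\le m\le v-k$ of Theorem~\ref{main_thm} becomes $3\le m\le 3t+m-3$, which holds. The theorem therefore yields
\[
A_q(m+3t,4;3)\;\ge\;A_q(m,4;3)\cdot q^{6t}\;+\;B_q(m+3t,\,3t,\,4;\,3).
\]
Next I would verify the hypotheses of Conjecture~\ref{conj_d_2k_2} with $v_1=m+3t$ and $v_2=3t$: the chain $v_1\ge v_2+2\ge k+1=4$ is equivalent to $m\ge 2$ and $t\ge 1$, both of which hold, and $k=3\ge 3$ is immediate. The conjecture then gives $B_q(m+3t,3t,4;3)=A_q(3t,\,2k-4;\,k-1)=A_q(3t,2;2)$. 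Because any two distinct $2$-spaces have subspace distance at least $2$, the entire Grassmannian of $2$-spaces in $\mathbb{F}_q^{3t}$ is itself a valid code, so $A_q(3t,2;2)=\gaussmnum{3t}{2}{q}$. Substituting this value together with the three base lower bounds for $A_q(m,4;3)$ into the linkage inequality above finishes each of the three displayed bounds.

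The only non-routine step is locating and citing the best-available lower bound for the base case $A_q(9,4;3)$; once that is in hand the rest of the argument is direct bookkeeping, and the main theoretical input is a single application of Conjecture~\ref{conj_d_2k_2} (which is why the proposition is stated conditionally).
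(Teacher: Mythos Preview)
Your argument is correct and matches the paper's proof: both handle $t=0$ by quoting the three base lower bounds and then, for $t\ge 1$, apply the main linkage theorem with $m\in\{7,8,9\}$ together with Conjecture~\ref{conj_d_2k_2} to convert the $B_q$-term into $A_q(3t,2;2)=\gaussmnum{3t}{2}{q}$. The only cosmetic differences are that the paper packages the linkage-plus-conjecture step as Theorem~\ref{thm_d_2k_2} and cites \cite[Corollary~4]{K1} for the $A_q(9,4;3)$ base case rather than the online tables.
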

\begin{Proof}
  For $t=0$ we have $A_q(7,4;3)\ge q^8+q^5+q^4+q^2-q$ \cite{MR3444245}, $A_q(8,4;3)\ge q^{10}+q^6+q^5+2q^4+2q^3+2q^2+q+1$, and 
  $A_q(9,4;3)\ge q^{12}+2q^8+2q^7+q^6+2q^5+2q^4-2q^2-2q+1$ \cite[Corollary 4]{K1}. 
  %%$A_q(9,4;3)\ge q^{12}+2q^8+2q^7+q^6+q^5+q^4+1$ \cite[Theorem 3.13]{cossidente2019subspace}. 
  For $t\ge 1$ let $a\in\{7,8,9\}$, 
  $v=a+3t$, and $m=v-3t$, i.e., $m=a$. Applying Theorem~\ref{thm_d_2k_2} with $k=3$ gives the stated formulas.  
\end{Proof}

The last two parametric inequalities would also strictly improve the best known lower bounds for $q=3$ and $t\ge 1$. Also for $k>3$ 
strict improvements can be concluded from Theorem~\ref{thm_d_2k_2}.
\begin{Proposition}
  Assuming that Conjecture~\ref{conj_d_2k_2} is true, we have 
  \begin{eqnarray*}
    A_q(10,6;4)&\ge& q^{12} +q^6+2q^2+2q+1,\\ 
    A_q(13,6;4)&\ge& q^{18} +q^{12}+2q^8+2q^7+q^6+2q^5+2q^4-2q^2-2q+1,\text{ and}\\
    A_q(14,6;4)&\ge& q^{20}+q^{14}+q^{11}+q^{10}+q^8-q^7+q^2+q+1. %%, and
    %%$A_q(17,6;4)\ge q^{26} +q^{20}+2q^{16}+2q^{15}+q^{14}+q^{13}+q^{12}+q^8+1$. => geht besser mit Rueckgriff auf $A_q(13,4;3)$
  \end{eqnarray*} 
\end{Proposition}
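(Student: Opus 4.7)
The plan is to apply Theorem~\ref{thm_d_2k_2} to each of the three cases with $k=4$ and $m=k=4$, then invoke the best known (or freshly obtained) lower bounds for $A_q(v-4,4;3)$. Taking $m=4$ is useful because $A_q(4,6;4)=1$ trivially (a $4$-space in $\F_q^4$ admits only one codeword), so Theorem~\ref{thm_d_2k_2} collapses to
\[
A_q(v,6;4)\ \ge\ q^{2(v-4)}+A_q(v-4,4;3),
\]
valid for $v\ge 4$ under the assumption that Conjecture~\ref{conj_d_2k_2} holds (with parameters $k=4$, $2k-2=6$). Thus each of the three claimed inequalities reduces to a single numerical substitution.

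For $v=10$, the exponent in the first term is $q^{12}$, and the residual summand is $A_q(6,4;3)$. The standard lower bound $A_q(6,4;3)\ge q^6+2q^2+2q+1$ (well known from the Echelon--Ferrers construction, see e.g.\ \cite[Theorem 4.2]{MR0404010}-type enumeration together with the standard constructions used throughout \cite{HKKW2016Tables}) yields the first asserted inequality. For $v=13$, the first term is $q^{18}$ and the residual is $A_q(9,4;3)$; I would quote the lower bound $A_q(9,4;3)\ge q^{12}+2q^8+2q^7+q^6+2q^5+2q^4-2q^2-2q+1$ from \cite[Corollary 4]{K1}, which is already cited in the proof of Proposition~\ref{prop_d_4_k_3}. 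This yields the second inequality after summation.

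For $v=14$, the first term is $q^{20}$ and the residual is $A_q(10,4;3)$. Here one would use the freshly proved Proposition~\ref{prop_d_4_k_3} with $a=7$ and $t=1$, which gives
\[
A_q(10,4;3)\ \ge\ (q^{8}+q^{5}+q^{4}+q^{2}-q)\cdot q^{6}+\gaussmnum{3}{2}{q}
\ =\ q^{14}+q^{11}+q^{10}+q^{8}-q^{7}+q^{2}+q+1,
\]
using $\gaussmnum{3}{2}{q}=q^2+q+1$. Adding this to $q^{20}$ yields the third asserted inequality.

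There is really no conceptual obstacle; the whole proof is a bookkeeping application of Theorem~\ref{thm_d_2k_2} with $m=k=4$. The only mild subtlety is the choice $m=k$ (as opposed to a larger $m$, as one might instinctively try by analogy with the improved linkage construction): with $m=k$ the leading factor $A_q(m,6;4)$ collapses to $1$, but the \emph{residual} term $A_q(v-m,2k-4;k-1)=A_q(v-4,4;3)$ becomes maximally strong, and for these three small values of $v$ it is exactly this residual term that drives the improvement. Verifying that $m=k=4$ indeed beats every other admissible $m\in\{4,5,\dots,v-4\}$ — given how small the best known values of $A_q(m,6;4)$ are for $m\in\{5,6,7\}$ compared to the geometric gain $q^{2(v-m)}$ in going down to $m=4$ — is the only point that requires a brief numerical check.
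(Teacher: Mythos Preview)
Your proposal is correct and follows essentially the same route as the paper: the paper applies Corollary~\ref{cor_d_2k_2}(a) (which is precisely Theorem~\ref{thm_d_2k_2} with $m=k$) with $k=4$, and then inserts the same three lower bounds for $A_q(6,4;3)$, $A_q(9,4;3)$, and $A_q(10,4;3)$ that you use. One minor remark: your attribution of $A_q(6,4;3)\ge q^6+2q^2+2q+1$ to \cite[Theorem~4.2]{MR0404010} is off (that reference treats partial spreads); the paper cites \cite[Theorem~2]{MR3329980} for this bound.
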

\begin{Proof}
  Since $A_q(6,4;3)\ge q^6+2q^2+2q+1$, see e.g.\ \cite[Theorem 2]{MR3329980}, we conclude $A_q(10,6;4) \ge q^{12} +q^6+2q^2+2q+1$
  from Corollary~\ref{cor_d_2k_2}.(a), where we set $k=4$. %%from Theorem~\ref{thm_d_2k_2} setting $k=m=4$. 
  Using Proposition~\ref{prop_d_4_k_3} we conclude the second and the third lower bound
  from Corollary~\ref{cor_d_2k_2}.(a) with $k=4$. %%from Theorem~\ref{thm_d_2k_2} with $k=m=4$. 
  %% Setting $k=4$ and $m=13$ in Theorem~\ref{thm_d_2k_2} then gives the fourth lower bound 
  %% using our lower bound for $A_q(13,6;4)$ and Proposition~\ref{prop_d_4_k_3}.   
\end{Proof}    

%% \bibliography{note_linkage}

\begin{thebibliography}{10}

\bibitem{MR0404010}
A.~Beutelspacher.
\newblock Partial spreads in finite projective spaces and partial designs.
\newblock {\em Mathematische Zeitschrift}, 145(3):211--229, 1975.

\bibitem{cossidente2019combining}
A.~Cossidente, S.~Kurz, G.~Marino, and F.~Pavese.
\newblock Combining subspace codes.
\newblock {\em arXiv preprint 1911.03387}, 2019.

\bibitem{MR2589964}
T.~Etzion and N.~Silberstein.
\newblock Error-correcting codes in projective spaces via rank-metric codes and
  {F}errers diagrams.
\newblock {\em IEEE Transactions on Information Theory}, 55(7):2909--2919,
  2009.

\bibitem{MR3015712}
T.~Etzion and N.~Silberstein.
\newblock Codes and designs related to lifted {MRD} codes.
\newblock {\em IEEE Transactions on Information Theory}, 59(2):1004--1017,
  2013.

\bibitem{MR867648}
P.~Frankl and R.~M. Wilson.
\newblock The {E}rd{\H{o}}s-{K}o-{R}ado theorem for vector spaces.
\newblock {\em Journal of Combinatorial Theory, Series A}, 43(2):228--236,
  1986.

\bibitem{gabidulin1985theory}
E.~Gabidulin.
\newblock Theory of codes with maximum rank distance.
\newblock {\em Problemy Peredachi Informatsii}, 21(1):3--16, 1985.

\bibitem{gluesing2015cyclic}
H.~Gluesing-Luerssen, K.~Morrison, and C.~Troha.
\newblock Cyclic orbit codes and stabilizer subfields.
\newblock {\em Advances in Mathematics of Communications}, 9(2):177--197, 2015.

\bibitem{MR3543532}
H.~Gluesing-Luerssen and C.~Troha.
\newblock Construction of subspace codes through linkage.
\newblock {\em Advances in Mathematics of Communications}, 10(3):525--540,
  2016.

\bibitem{DH}
D.~Heinlein.
\newblock Generalized linkage construction for constant-dimension codes.
\newblock {\em arXiv preprint 1910.11195v2}, 2019.

\bibitem{heinlein2019new}
D.~Heinlein.
\newblock New {L}{M}{R}{D} code bounds for constant dimension codes and
  improved constructions.
\newblock {\em IEEE Transactions on Information Theory}, 65(8):4822--4830,
  2019.

\bibitem{HKKW2016Tables}
D.~Heinlein, M.~Kiermaier, S.~Kurz, and A.~Wassermann.
\newblock Tables of subspace codes.
\newblock {\em arXiv preprint 1601.02864}, 2016.

\bibitem{paper_333}
D.~Heinlein, M.~Kiermaier, S.~Kurz, and A.~Wassermann.
\newblock A subspace code of size $333$ in the setting of a binary $q$-analog
  of the {F}ano plane.
\newblock {\em Advances in Mathematics of Communications}, 13(3):457--475,
  August 2019.

\bibitem{heinlein2017asymptotic}
D.~Heinlein and S.~Kurz.
\newblock Asymptotic bounds for the sizes of constant dimension codes and an
  improved lower bound.
\newblock In {\em International Castle Meeting on Coding Theory and
  Applications}, pages 163--191. Springer, 2017.

\bibitem{heinlein2017coset}
D.~Heinlein and S.~Kurz.
\newblock Coset construction for subspace codes.
\newblock {\em IEEE Transactions on Information Theory}, 63(12):7651--7660,
  2017.

\bibitem{MR3444245}
T.~Honold and M.~Kiermaier.
\newblock On putative {$q$}-analogues of the {F}ano plane and related
  combinatorial structures.
\newblock In {\em Dynamical systems, number theory and applications}, pages
  141--175. World Sci. Publ., Hackensack, NJ, 2016.

\bibitem{MR3329980}
T.~Honold, M.~Kiermaier, and S.~Kurz.
\newblock Optimal binary subspace codes of length $6$, constant dimension $3$
  and minimum subspace distance $4$.
\newblock In {\em Topics in finite fields}, volume 632 of {\em Contemporary
  Mathematics}, pages 157--176. Amer. Math. Soc., Providence, RI, 2015.

\bibitem{MR2451015}
R.~K\"otter and F.~R. Kschischang.
\newblock Coding for errors and erasures in random network coding.
\newblock {\em IEEE Transactions on Information Theory}, 54(8):3579--3591,
  2008.

\bibitem{K1}
S.~Kurz.
\newblock Subspaces intersecting in at most a point.
\newblock {\em Designs Codes and Cryptography}, 88:595--599, 2020.

\bibitem{silberstein2015error}
N.~Silberstein and A.-L. Trautmann.
\newblock Subspace codes based on graph matchings, ferrers diagrams, and
  pending blocks.
\newblock {\em IEEE Transactions on Information Theory}, 61(7):3937--3953,
  2015.

\bibitem{silva2008rank}
D.~Silva, F.~Kschischang, and R.~K\"otter.
\newblock A rank-metric approach to error control in random network coding.
\newblock {\em IEEE Transactions on Information Theory}, 54(9):3951--3967,
  2008.

\bibitem{XC}
L.~Xu and H.~Chen.
\newblock New constant--dimension subspace codes from maximum rank distance
  codes.
\newblock {\em IEEE Transactions on Information Theory}, 64(9):6315--6319,
  2018.

\end{thebibliography}
%% \bibdata{note_linkage}
%% %%\bibliographystyle{amsplain}
%% \bibliographystyle{abbrv}

\end{document}